\newtheorem{thm}{Theorem}
\newtheorem{lem}{Lemma}
\newtheorem{prob}{Problem}
\newcommand{\bs}[1]{\ensuremath{\boldsymbol{#1}}}
\newcommand{\bv}{\ensuremath{\bs v}\xspace}
\newcommand{\bw}{\ensuremath{\bs w}\xspace}
\newcommand{\bx}{\ensuremath{\bs x}\xspace}
\newcommand{\bW}{\ensuremath{\bs W}\xspace}
\newcommand{\bX}{\ensuremath{\bs X}\xspace}
\newcommand{\bZ}{\ensuremath{\bs Z}\xspace}
\newcommand{\pci}[1]{\ensuremath{\bs{p}_{\boldsymbol{c}_{i}}}\xspace}
\newcommand{\PhiGaussian}{\Phi_\text{StdNorm}}
\newcommand{\UPWAm}{m^-}
\newcommand{\UPWAc}{c^-}
\newcommand{\bXU}{\bX_{\overline{U}}}
\newcommand{\Xd}{\overline{X}_{\mathrm{d}}}
\newcommand{\mubW}{\overline{\mu}_{\bW}}
\newcommand{\CbW}{C_{\bW}}
\newcommand{\mubXU}{\overline{\mu}_{\bX, \overline{U}}}
\newcommand{\CbXU}{C_{\bX, \overline{U}}}
\newcommand{\ProbbXU}{\mathbb{P}_{\bX}^{\overline{U}}}
\newcommand{\deltalb}{\delta_\mathrm{lb}}
\newcolumntype{M}[1]{>{\centering\arraybackslash}m{#1}}
\newcommand{\Exp}{\mathbb{E}}
\title{Convexified Open-Loop Stochastic Optimal Control for Linear Non-Gaussian Systems}
\author{Vignesh Sivaramakrishnan$^\ast$, Abraham P. Vinod$^\ast$, and Meeko M. K. Oishi
 \thanks{This material is based upon work supported by the National Science Foundation under NSF Grant Number CNS-1836900, and by the Air Force Office of Scientific Research under  AFRL Grant No. FA9453-17-C-0087.  Any opinions, findings, and conclusions or recommendations expressed in this material are those of the authors and do not necessarily reflect the views of the National Science Foundation. \newline
         \indent Vignesh Sivaramakrishnan, and Meeko Oishi are with the Electrical \& Computer Engineering, University of New Mexico, Albuquerque, NM, US. Email: {\tt vigsiv@unm.edu; oishi@unm.edu}.\newline
         \indent Abraham Vinod is with Mitsubishi Electric Research Laboratories (MERL), Cambridge, MA, 02139, USA. Email: {\tt aby.vinod@gmail.com} This work was completed, while Vinod was a post-doctoral research fellow at the
University of Austin, Texas.\newline
         $^\ast$ These authors contributed equally to this work.\newline
         }
}
\date{}
\begin{document}
\maketitle

\begin{abstract}
    We consider stochastic optimal control of linear dynamical systems with additive non-Gaussian disturbance. 
    We propose a novel, sampling-free approach, based on Fourier transformations and convex optimization, to cast the stochastic optimal control problem as a difference-of-convex program.
    In contrast to existing moment based approaches, our approach invokes higher moments, resulting in less conservatism.
    We employ piecewise affine approximations and the well-known convex-concave procedure, to efficiently solve the resulting optimization problem via standard conic solvers.
    We demonstrate that the proposed approach is computationally faster than  existing particle based and moment based approaches, without compromising probabilistic safety constraints.
\end{abstract}

\section{Introduction}

Stochastic optimal control requires enforcement of chance
constraints, which permit violation of the state constraints
with a probability below a specified threshold \cite{HomChaudhuriACC2017,vitus2016stochastic,LesserCDC2013,blackmore2011chance}. Chance constraints trade off constraint violation with the objective cost.  
However, such constraints are hard to implement in a computationally tractable manner, especially for systems with non-Gaussian disturbances.
In this paper, we propose a method for stochastic optimal control of linear systems with arbitrary disturbances, that results in a scalable solution based in convex programming. 

Enforcing probabilistic safety constraints in stochastic optimal control problems is difficult because it typically requires
high dimensional integrals that are hard to compute and enforce. 
The two main approaches to tackle chance constraints are based in sampling or risk allocation~\cite{mesbah2016stochastic}. 
Sampling based approaches
approximate the uncertainty distribution using a finite
number of samples (particles), and formulate a mixed-integer
optimization problem~\cite{blackmore2011chance}. This approach is
independent of the particular distribution, and has well characterized lower
bounds on the number of particles needed to achieve high
quality
solutions~\cite{calafiore2006scenario,SartipizadehACC2019}.
However, these bounds typically require a large
number of particles, resulting in computationally expensive, mixed-integer optimization problems. 

In contrast, risk allocation based approaches are sampling-free  approaches that compute open-loop or affine-feedback controllers~\cite{oldewurtel2014stochastic,ono2008iterative,vitus_feedback_2011,paulson2017stochastic}. They utilize Boole's inequality to decompose joint chance constraints into simpler, individual chance constraints, and optimize for violation probability thresholds present in the constraints.
For a fixed risk allocation, the control synthesis problem is convex for Gaussian disturbances~\cite{oldewurtel2014stochastic,ono2008iterative}. On the other hand, non-Gaussian disturbances admit convex but conservative enforcement of the chance constraints using concentration inequalities~\cite{paulson2017stochastic,CalafioreJOTA2006}.
Unfortunately, simultaneous risk allocation and controller synthesis renders the optimal control problem non-convex.
Therefore, existing approaches leverage coordinate descent algorithms to approximately solve the stochastic optimal control problem.

{\em Our main contribution is a computationally efficient and numerically robust solution for stochastic optimal control of linear dynamical systems with non-Gaussian disturbances, based in risk allocation, Fourier transformations, and convex optimization.} Our approach simultaneously performs risk allocation and open-loop controller synthesis, without compromising on computational tractability or relying on conservative enforcement of chance-constraints.
The key to this is 1) the use of
characteristic functions (Fourier transformations of the probability density function) to enforce chance constraints involving non-Gaussian random vectors exactly, and 2) 
reformulation of the risk allocation problem
as a difference-of-convex program, which  
can be solved locally efficiently 
via convex optimization~\cite{lipp_variations_2016}.
In combination with tight, conic, piecewise affine approximations of the non-conic convex constraints, we can 
leverage standard off-the-shelf conic solvers to solve the stochastic optimal control problem.  

The main limitation of this approach is that it requires open-loop controller synthesis, which results in more conservative solutions than with a closed-loop controller.  
Open-loop control synthesis are commonplace in stochastic model predictive control~\cite{mesbah2016stochastic,lorenzen2016constraint}, and 
essential in applications with hard computational constraints or sensing constraints that preclude feedback control.  
Consider 
hypersonic vehicles, which suffer from computing and sensing limitations at their operational speeds and temperatures \cite{chan1991aeroservoelastic,parker2007control}, or space applications in harsh environments, such as on Mars~\cite{balaram2018mars}, in which production and testing of sensors that work reliably is difficult.

The organization of the paper is as
follows: 
We present the problem formulation in Section~\ref{sec:PS}.
Reformulation of the stochastic optimal control problem
using risk allocation, piecewise affine approximation, and difference-of-convex programming is presented in Section~\ref{sec:convexification}. 
Specialization to Gaussian disturbances, and to random
initial conditions are presented in Section~\ref{sec:extensions}. 
We demonstrate our approach on two motion planning examples in Section~\ref{sec:examples}, and summarize our contribution in Section~\ref{sec:conclusion}.

\section{Problem statement}
\label{sec:PS}

We employ the following notation throughout the paper:  The discrete-time interval
$\mathbb{N}_{[a,b]}$ enumerates all natural numbers
from integers $a$ to $b$.  Random vectors are denoted with a bold case
$\bv$, non-random vectors are denoted with an overline $\overline{v}$,
and the trace operator is denoted by $ \mathrm{tr}(\cdot)$.

Consider a stochastic, linear, time-varying system
\begin{align}
    \bx(k+1)&=A(k)\bx(k) + B(k)\overline{u}(k) + \bw(k)\label{eq:sys}
\end{align}
with state $\bx(k)\in \mathbb{R}^n$, input
$\overline{u}(k)\in \mathcal{U}\subset\mathbb{R}^m$, and
disturbance $\bw(k)\in \mathbb{R}^p$. For a time horizon of
$N\in \mathbb{N}$, we assume knowledge of the
disturbance probability density $\psi_{\bW}$ describing the
stochasticity of the concatenated disturbance random vector
$\bW={[\bw(0)^\top\ \bw(1)^\top\ \ldots\
\bw(N-1)^\top]}^\top\in \mathbb{R}^{pN}$. 
For example, for an independent and identical random
disturbance process $\bw(k)\sim\psi_{\bw}$ with $k\in
\mathbb{N}_{[0,N-1]}$,
$\psi_{\bW}=\prod_{k=0}^{N-1}\psi_{\bw}$.

Throughout the paper, we will assume that $\psi_{\bW}$ is log-concave. 
Log-concave probability densities form a wide class of
unimodal densities~\cite{dharmadhikari1988unimodality},
including Gaussian and exponential disturbances, and
disturbances with {convex finite support} like
triangular and uniform disturbances over convex sets.
Recall that a function $f: \mathbb{R}_{\geq 0} \to
\mathbb{R}$ is log-concave, if $\log (f)$ is a
concave~\cite[Sec.  3.5.1.]{BoydConvex2004}. We follow the
convention that $\log(0)\triangleq -\infty$. Since
log-concavity is preserved under products, log-concavity of
$\psi_{\bw_k}$ is sufficient for log-concavity of
$\psi_{\bW}$.

Given a fixed initial state $\overline{x}(0) \in
\mathbb{R}^n$, we define the concatenated (stochastic) state vector and
concatenated (deterministic) input vector associated with the
dynamics \eqref{eq:sys} as follows:
\begin{subequations}
    \begin{align}
        \bX&={\left[{\bx(1)}^\top\ \ldots\
        {\bx(N)}^\top\right]}^\top\in \mathbb{R}^{nN}
        \label{eq:concat_X},\\
    \overline{U}&={\left[{\overline{u}(0)}^\top\ \ldots\
    {\overline{u}(N-1)}^\top\right]}^\top\in
    \mathcal{U}^{N}\subset\mathbb{R}^{mN}
    \label{eq:concat_U}.
    \end{align}\label{eq:concat}%
\end{subequations}%
From \eqref{eq:sys} and \eqref{eq:concat}, we have
\begin{align}
    \bX&= \bar{A} \overline{x}(0) + H \overline{U} + G \bW\label{eq:stacked_X}
\end{align}
where the matrices $ \bar{A}\in \mathbb{R}^{nN\times
n}$, $H\in \mathbb{R}^{nN\times mN}$, and $G\in
\mathbb{R}^{nN\times pN}$ are obtained from the dynamics
\eqref{eq:sys}. Due to the linearity of
\eqref{eq:stacked_X}, the mean and the covariance vector of
$\bX$ admit closed-form expressions, 
\begin{subequations}
\begin{align}
    \mubXU&=\bar{A} \overline{x}(0) + H \overline{U} + G \mubW\label{eq:X_U_mu}\\
    \CbXU&=G\CbW G^\top\label{eq:X_U_cov}.
\end{align}\label{eq:X_U_stoch}%
\end{subequations}%

We are interested in solving a stochastic optimal control
problem that minimizes a quadratic cost in $\bX$ and $
\overline{U}$ with
pre-specified positive semi-definite matrices $Q\in
\mathbb{R}^{(nN)\times(nN)}$ and $R\in
\mathbb{R}^{(mN)\times (mN)}$, while satisfying hard constraints
on the input $\mathcal{U}^{N}\subset \mathbb{R}^{mN}$, and soft constraints on the state with high
probability. 
We assume that the input and state constraints are polytopic. 
Given a {probabilistic constraint violation
threshold} $\Delta\in[0,1)$ and a desired trajectory $\Xd
\in \mathscr{S}$, we wish to solve the following stochastic
optimal control problem,
\begin{subequations}
    \begin{align}
        \underset{\overline{U}}{\mathrm{minimize}}&\quad \mathbb{E}_{\bX}[(\bX - \Xd)^\top Q(\bX -
\Xd) + \overline{U}^\top R \overline{U}]\label{eq:stoc_cost}\\
        \mathrm{subject\ to}
   						    &\quad \overline{U}\in
                            \mathcal{U}^{N}, \eqref{eq:X_U_mu}, \eqref{eq:X_U_cov}\label{eq:stoc_input}\\
   						    &\quad \ProbbXU\left\{\bXU\in \mathscr{S}\right\}\geq 1-\Delta\label{eq:stoc_sc}
    \end{align}\label{prob:stoc}%
\end{subequations}%
with decision variable $ \overline{U}\in
\mathbb{R}^{mN}$. 
The cost function is {convex quadratic} in $
\overline{U}$ since $\mathrm{tr}(Q \CbXU)$ is independent of
$ \overline{U}$ by \eqref{eq:X_U_cov}.
We define $
\mathscr{S}=\{ \overline{X}\in \mathbb{R}^{nN}: P
\overline{X}\leq \overline{q}\}$ with $P={\left[
\overline{p}_1^\top\ \ldots\
\overline{p}_{L_X}^\top\right]}^\top\in
\mathbb{R}^{{L_X}\times nN}$ and $\overline{q}={[q_1\
\ldots\ q_{L_X}]}^\top \in \mathbb{R}^{L_X}$ with $L_X\in
\mathbb{N}$ defining the number of hyperplanes in the polytope.

For a $\psi_{\bw}$ that is Gaussian, risk allocation is an established approach to conservatively assure (5c)~\cite{oldewurtel2014stochastic,ono2008iterative,vitus2016stochastic,vitus_feedback_2011,vinod2019piecewise}.
By exploiting the properties of a Gaussian random variable, in conjunction with Boole's inequality, (5c) can be reformulated as a collection of linear or second order cone constraints. 
This results in a convex program which enables efficient controller synthesis via standard solvers. 

However, non-Gaussian disturbances do not admit similar
reformulations. For non-Gaussian disturbances, particle based and
moment based approaches are the two main approaches to solve
\eqref{prob:stoc}. However, these approaches have
significant drawbacks. Particle based approaches use
sampling to approximate \eqref{eq:CDF_cc_a}, and rely on
computationally expensive, mixed integer, linear program
solvers for controller
synthesis~\cite{blackmore2011chance,SartipizadehACC2019}. 
Moment based approaches use concentration
inequalities and risk allocation to enforce
\eqref{eq:stoc_sc}. Even though the moment based approaches
enable controller synthesis via convex optimization, the
resulting reformulation is typically conservative \cite{mesbah2016stochastic,paulson2017stochastic,nemirovski2006convex}.
The conservativeness arises from the fact that only few
lower-order moments are used to tractably enforce the chance
constraints, ignoring the available, higher-order moment
information.

To address the computationally expensive nature of the
particle based control and the conservativeness of the
moment based approach, we present a Fourier transform based
approach to solve \eqref{prob:stoc}, 
which uses all the
moments of the underlying distribution. 
We propose to solve two
problems:

\begin{prob}
    Extend the risk-allocation technique for non-Gaussian
    disturbances using Fourier transforms and
    piecewise affine approximations.
\end{prob}
\begin{prob}
    Solve \eqref{prob:stoc} for an arbitrary, log-concave,
    stochastic disturbance $\bW$ using convex optimization
    and piecewise affine approximation of the chance constraint from Problem 1.
\end{prob}

\section{Convexification of non-Gaussian joint chance constraints}
\label{sec:convexification}
\subsection{Risk-allocation for log-concave sisturbances}

The standard risk-allocation approach~\cite{vitus2016stochastic,oldewurtel2014stochastic,ono2008iterative,vitus_feedback_2011,vinod2019piecewise}, transforms the joint chance constraints
\eqref{eq:stoc_sc} into a set of {individual chance
constraints} via Boole's inequality,
\begin{align}
        &\mathbb{P}\left\{P\bXU\leq\overline{q}\right\} \geq 1 - \Delta\\
    \Leftrightarrow &\mathbb{P}\left\{\cap_{i=1}^{L_X}
        \left\{p_i^\top G\bW\leq q_i -
        \overline{p}_i^{\top}\left(\bar{A} \overline{x}(0) + H
\overline{U}\right)\right\}\right\} \geq 1 - \Delta \nonumber \\
    \Leftrightarrow &\mathbb{P}\left\{\cup_{i=1}^{L_X}
        \left\{p_i^\top G\bW > q_i -
        \overline{p}_i^{\top}\left(\bar{A} \overline{x}(0) + H
\overline{U}\right)\right\}\right\} \leq \Delta \nonumber \\
    \Leftarrow &\sum_{i=1}^{L_X}\mathbb{P}\left\{
        p_i^\top G\bW > q_i -
        \overline{p}_i^{\top}\left(\bar{A} \overline{x}(0) + H
\overline{U}\right)\right\} \leq \Delta \nonumber \\
    \Leftarrow & \left\{
        \begin{array}{l}
            \mathbb{P}\left\{p_i^{\top}G\bW\leq
            q_i- \overline{p}_i^{\top}\left(\bar{A}
        \overline{x}(0) + H \overline{U}\right)\right\}\\
        \hspace*{8em}
        \geq 1 - \delta_i,\ \forall i \in
        \mathbb{N}_{[1,L_X]},\\[1ex]
        \sum\nolimits_{i=1}^{L_X}
        \delta_i \leq\Delta,\ \delta_i \in [0,\Delta],\ \forall i
        \in \mathbb{N}_{[1,L_X]}
        \end{array}\right.\label{eq:cc}.
\end{align}
Here, $\delta_i \in[0,1)$ are auxiliary decision variables
that represent the risk of violating the constraint
$p_i^\top \overline{X} \leq q_i$, $i\in
\mathbb{N}_{[1,L_X]}$. We have $\delta_i \leq \Delta$ since
$\sum_{i=1}^{L_X}\delta_i\leq \Delta$ and $\delta_i$ are
non-negative.

Let $\Phi_{ \overline{p}_i^{\top}G\bW}: \mathbb{R} \to
[0,1]$ denote the cumulative distribution function of the
random variable $ \overline{p}_i^{\top}G\bW$,
\begin{align}
\Phi_{
\overline{p}_i^{\top}G\bW}\left(q'\right)=\mathbb{P}\left\{\overline{p}_i^{\top}G\bW\leq
q'\right\},
\end{align}
for any scalar $q' \in \mathbb{R}$. We use $\Phi_{
\overline{p}_i^{\top}G\bW}$ to rewrite the
constraints \eqref{eq:cc} as
\begin{subequations}
\begin{align}
    &\Phi_{ \overline{p}_i^{\top}G\bW}\left(d_i-
 \overline{p}_i^{\top}H
\overline{U}\right) \geq  1-\delta_i&\forall i \in
    \mathbb{N}_{[1,L_X]},\label{eq:CDF_cc_a}\\
                                     &\sum\nolimits_{i=1}^{L_X}
\delta_i\leq\Delta ,\ \delta_i\in [0,\Delta],&\forall i \in
\mathbb{N}_{[1,L_X]},\label{eq:CDF_cc_b}
\end{align}\label{eq:CDF_cc}%
\end{subequations}%
with scalar constants $$d_i = q_i-
\overline{p}_i^{\top}\bar{A} \overline{x}(0),\ \forall i\in
\mathbb{N}_{[1,L_X]}.$$
Any feasible controller $\overline{U} \in \mathcal{U}^N$
with a feasible risk allocation
$\overline{\delta}\triangleq [\delta_1\ \cdots\
\delta_{L_X}]\in {[0,1]}^{L_X}$ that satisfies
\eqref{eq:CDF_cc} automatically satisfies \eqref{eq:stoc_sc}.

\subsection{Enforcing chance constraints using characteristic functions}
\label{sub:cf}

The characteristic function of the disturbance vector $\bW$
with probability density function $\psi_{\bW}(\bar{z})$ is
defined as 
\begin{align}
    \Psi_{\bW}(\bar{\beta})&\triangleq
    \Exp_{\bW}\left[\mathrm{exp}\left({j\bar{\beta}^\top\bW}\right)\right] \nonumber \\
    &=\int_{\mathbb{R}^p}\exp({j\bar{\beta}^\top\bar{z})}
    \psi_{\bW}(\bar{z})d\bar{z}=
    \mathscr{F}\left\{\psi_{\bW}\right\}(-\bar{\beta})\label{eq:cfun_def}
\end{align}
where $ \mathscr{F}\{\cdot\}$ denotes the Fourier
transformation operator and $\bar{\beta}\in
\mathbb{R}^{nN}$.
Furthermore, from~\cite[Eq.  22.6.3]{cramer2016mathematical}, the
characteristic function of the random variable $
\overline{p}_i^{\top}G\bW$ is given by
\begin{align}
    \Psi_{ \overline{p}_i^{\top}G\bW}(\beta)=\Psi_{\bW}(
    (G^\top \overline{p}_i) \beta)\label{eq:Psi_aw}
\end{align}
for some $\beta \in \mathbb{R}$.

A key insight we use in this paper is that the evaluation of
the cumulative distribution function in~\eqref{eq:CDF_cc_a}
is given by a one-dimensional integration, i.e., for any
$s\in \mathbb{R}$,
\begingroup
    \makeatletter\def\f@size{9}\check@mathfonts
\begin{align}
    \Phi_{\overline{p}_i^\top G\bW}(s) &=
    \frac{1}{2}-\frac{1}{2\pi}\int_{0}^{\infty}\mathrm{Im}\left(\frac{\exp(j\beta
            s)\Psi_{\bW}((G^\top
    \overline{p}_i)\beta )}{j\beta}\right) d\beta,\label{eq:G_FT}
\end{align}
\endgroup%
where $\mathrm{Im}(z)$ denotes the imaginary component of a
complex number $z$.  Equation \eqref{eq:G_FT} enables
enforcing the chance constraint in \eqref{eq:CDF_cc_a} using
only $\Psi_{\bW}$ as opposed to using the probability density function, 
the known characteristic function of the
concatenated disturbance random vector $\bW$.  
Equation \eqref{eq:G_FT} follows from the
inversion of characteristic
functions~\cite{Gil,waller,witov}. We implement
\eqref{eq:G_FT} using quadrature techniques~\cite{cfun}.

\begin{lem}[$\text{\cite[Thm. 4.2.1]{prekopa1995stochastic}}$]\label{lem:logconcave_cdf}
    Under the assumption of log-concavity, $\Phi_{\overline{p}_i^{\top}G\bW}$ is log-concave over $ \mathbb{R}$.
\end{lem}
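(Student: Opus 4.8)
The plan is to reduce the $nN$-dimensional hypothesis to a one-dimensional statement about the density of the scalar random variable $Y_i \triangleq \overline{p}_i^{\top}G\bW$, and then apply Pr\'ekopa's integration theorem twice. First I would dispose of the degenerate case: if $G^{\top}\overline{p}_i = 0$ then $Y_i$ is almost surely constant, so $\Phi_{\overline{p}_i^{\top}G\bW}$ is a step function whose extended-real-valued logarithm is $-\infty$ on a half-line and $0$ on the complementary half-line, which is concave under the convention $\log(0)\triangleq-\infty$; hence I may assume $G^{\top}\overline{p}_i \neq 0$.

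Next I would show $Y_i$ has a log-concave density on $\mathbb{R}$. Choose an invertible linear map $T$ on $\mathbb{R}^{nN}$ whose first row equals $\overline{p}_i^{\top}G$ (possible since that row is nonzero). Then $\bW' \triangleq T\bW$ has density $\psi_{\bW'}(\bar{z}) = |\det T|^{-1}\psi_{\bW}(T^{-1}\bar{z})$, which is log-concave because precomposition with an affine invertible map and multiplication by a positive constant preserve log-concavity. The density of $Y_i$ is the first marginal $f_{Y_i}(t) = \int_{\mathbb{R}^{nN-1}} \psi_{\bW'}(t,\bar{v})\, d\bar{v}$, and by \cite[Thm.~4.2.1]{prekopa1995stochastic} integrating a log-concave function over a subset of its coordinates yields a log-concave function; therefore $f_{Y_i}$ is log-concave on $\mathbb{R}$.

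Then I would write the cumulative distribution function itself as a marginal of a jointly log-concave integrand. For $s,t\in\mathbb{R}$ put $g(s,t) = f_{Y_i}(t)\,\mathbf{1}\{t\le s\}$. As a function of $(s,t)$ the factor $f_{Y_i}(t)$ is log-concave (it does not depend on $s$), and $\mathbf{1}\{t\le s\}$ is the indicator of the closed half-plane $\{(s,t): s-t\ge 0\}$, hence log-concave since indicators of convex sets are log-concave; a pointwise product of log-concave functions is log-concave, so $g$ is log-concave on $\mathbb{R}^2$ and integrable in $t$ for each fixed $s$. Since $\Phi_{\overline{p}_i^{\top}G\bW}(s) = \int_{-\infty}^{s} f_{Y_i}(t)\,dt = \int_{\mathbb{R}} g(s,t)\,dt$, a second application of \cite[Thm.~4.2.1]{prekopa1995stochastic}, integrating out $t$, shows that $\Phi_{\overline{p}_i^{\top}G\bW}$ is log-concave on $\mathbb{R}$, as claimed.

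The hard part is not conceptual but a matter of checking the technical hypotheses of Pr\'ekopa's theorem — joint measurability of the integrands, finiteness of the integrals (guaranteed here since $\int f_{Y_i} = 1$), and that the linear image $\bW'$ genuinely possesses a density — together with being careful about boundary behaviour, namely that $\log\Phi_{\overline{p}_i^{\top}G\bW}$ may equal $-\infty$ on the (possibly empty) half-line to the left of the support of $Y_i$, which is consistent with the paper's convention and still concave. An essentially equivalent, shorter route is to cite directly the standard facts that the class of log-concave densities is closed under linear pushforwards and marginalization, and that the CDF of a log-concave density is log-concave; I would nonetheless spell out the two explicit invocations of Pr\'ekopa's theorem above so the argument is self-contained.
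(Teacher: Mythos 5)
Your proof is correct, but note that the paper does not prove this lemma at all: it simply cites it as an instance of Pr\'ekopa's Theorem 4.2.1, i.e.\ the standard fact that a log-concave density generates a log-concave measure, from which log-concavity of the one-dimensional CDF of any affine functional of $\bW$ follows immediately. What you do differently is reconstruct that fact from the marginalization form of Pr\'ekopa's theorem, applied twice: first to push $\psi_{\bW}$ forward through a completed invertible map $T$ and marginalize to get a log-concave density for $Y_i=\overline{p}_i^{\top}G\bW$, then to integrate $f_{Y_i}(t)\,\mathbf{1}\{t\le s\}$ over $t$. Both applications are sound (the half-plane indicator is log-concave, products of log-concave functions are log-concave, and your handling of the degenerate case $G^{\top}\overline{p}_i=0$ and of the $-\infty$ values of $\log\Phi$ is consistent with the paper's convention), so your argument buys self-containedness at the cost of length. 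Two small remarks: (i) a single application of Pr\'ekopa suffices if you write $\Phi_{\overline{p}_i^{\top}G\bW}(s)=\int\psi_{\bW}(\bar z)\,\mathbf{1}\{\overline{p}_i^{\top}G\bar z\le s\}\,d\bar z$ directly, since the integrand is jointly log-concave in $(s,\bar z)$; this removes the need for the auxiliary map $T$, the existence-of-density step, and the separate degenerate case; (ii) $\bW$ lives in $\mathbb{R}^{pN}$ (with $G\in\mathbb{R}^{nN\times pN}$), so your $T$ should act on $\mathbb{R}^{pN}$ and the inner integral should be over $\mathbb{R}^{pN-1}$, not $\mathbb{R}^{nN}$ and $\mathbb{R}^{nN-1}$ --- a harmless slip.
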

Using \eqref{eq:CDF_cc} and Lemma~\ref{lem:logconcave_cdf},
we approximate \eqref{prob:stoc} as follows,
\begin{subequations}
    \newcommand{\optSpace}{0.6em}
    \begin{align}
        \underset{\overline{U},\overline{t}}{\mathrm{minimize}}&\hspace*{\optSpace}  {(\mubXU - \Xd)}^\top Q{(\mubXU -  \Xd)}+ \overline{U}^\top R \overline{U}\nonumber\\
        &\quad + \mathrm{tr}(Q \CbXU)\label{eq:stoc_ng_cost}\\
        \mathrm{subject\ to}
        &\hspace*{\optSpace} \overline{U}\in
        \mathcal{U}^{N}\label{eq:stoc_ng_input}\\
\forall i\in \mathbb{N}_{[1,L_X]}, &\hspace*{\optSpace}
    \overline{p}_i^\top H\overline{U} +
    \Phi_{\overline{p}_i^\top G\bW}^{-1}(\epsilon) \leq d_i \label{eq:stoc_ng_nonneg}\\
\forall i\in\mathbb{N}_{[1,L_X]}, &\hspace*{\optSpace}
\log\left(\Phi_{\overline{p}_i^\top G\bW}(d_i -
    \overline{p}_i^\top H\overline{U})\right)\geq t_i \label{eq:stoc_ng_ICC}\\
            \forall i\in\mathbb{N}_{[1,L_X]},
                                  &\hspace*{\optSpace}
                                  t_i\in[\log(1-\Delta),0]\label{eq:stoc_ng_t_i}\\
        \forall i\in\mathbb{N}_{[1,L_X]}, & \hspace*{\optSpace}
        \log\left(\sum_{i=1}^{L_X}\exp(t_i)\right)\geq
        \log(L_X-\Delta).\label{eq:stoc_ng_ralloc}
        \end{align}\label{prob:stoc_ng}%
\end{subequations}%
for a small scalar $\epsilon > 0$ and a change of
variables 
\begin{align}
    t_i\triangleq\log(1-\delta_i),\ \forall i \in
    \mathbb{N}_{[1,L_X]}\label{eq:ti_defn}
\end{align}
with $\overline{t}=[t_1,...,t_{L_X}]\in\mathbb{R}^{L_X}$.


We now establish the relationship between
\eqref{prob:stoc} and \eqref{prob:stoc_ng}, and show that
\eqref{prob:stoc_ng} is a non-convex program with a reverse
convex constraint. Recall that reverse-convex constraints
are optimization constraints of the form $f \geq 0$, where
$f$ is a convex function.

\begin{thm}\label{thm:feas1}
    Assuming that the underlying distribution is log-concave, the
    following statements hold for any $\Delta\in[0,1)$ and
    any $\epsilon > 0$:
    \begin{enumerate}
        \item Every feasible solution of \eqref{prob:stoc_ng}
            is feasible for~\eqref{prob:stoc}, and
        \item The cost and the constraints
            \eqref{eq:stoc_ng_input}--\eqref{eq:stoc_ng_ICC}
            are convex.  However, \eqref{eq:stoc_ng_ralloc}
            is a reverse convex constraint.
    \end{enumerate}
\end{thm}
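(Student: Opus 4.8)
The plan is to prove the two parts separately: part~1 by undoing the change of variables \eqref{eq:ti_defn} and reducing to the already-derived risk-allocation chain, and part~2 by inspecting each constraint.

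For part~1, I would take an arbitrary feasible pair $(\overline{U},\overline{t})$ of \eqref{prob:stoc_ng} and set $\delta_i \triangleq 1 - e^{t_i}$. The box \eqref{eq:stoc_ng_t_i}, i.e.\ $t_i\in[\log(1-\Delta),0]$, gives $1-\Delta \le e^{t_i}\le 1$, hence $\delta_i\in[0,\Delta]$; exponentiating \eqref{eq:stoc_ng_ralloc} and using $\sum_i e^{t_i} = L_X - \sum_i \delta_i$ together with $L_X-\Delta>0$ (since $L_X\ge 1$, $\Delta<1$) gives $\sum_i \delta_i \le \Delta$. These two facts recover \eqref{eq:CDF_cc_b}. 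Next, \eqref{eq:stoc_ng_nonneg} together with monotonicity of the CDF (read via its generalized inverse if $\Phi_{\overline{p}_i^\top G\bW}$ is not strictly increasing) yields $\Phi_{\overline{p}_i^\top G\bW}(d_i - \overline{p}_i^\top H\overline{U}) \ge \epsilon > 0$, so the logarithm in \eqref{eq:stoc_ng_ICC} is finite; exponentiating \eqref{eq:stoc_ng_ICC} then gives $\Phi_{\overline{p}_i^\top G\bW}(d_i - \overline{p}_i^\top H\overline{U}) \ge e^{t_i} = 1-\delta_i$, which is exactly \eqref{eq:CDF_cc_a}. Having established \eqref{eq:CDF_cc}, I would invoke the chain of implications in \eqref{eq:cc} (Boole's inequality) derived in Section~\ref{sec:convexification} to conclude \eqref{eq:stoc_sc}; since \eqref{eq:X_U_mu}--\eqref{eq:X_U_cov} are definitional and \eqref{eq:stoc_ng_input} coincides with the input constraint in \eqref{eq:stoc_input}, the pair $(\overline{U},\overline{\delta})$ is feasible for \eqref{prob:stoc}.

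For part~2, I would go constraint by constraint. The objective \eqref{eq:stoc_ng_cost} is convex quadratic in $\overline{U}$: by \eqref{eq:X_U_mu} the mean $\mubXU$ is affine in $\overline{U}$, $Q\succeq 0$ and $R\succeq 0$, and $\mathrm{tr}(Q\CbXU)$ is a constant by \eqref{eq:X_U_cov}. The set \eqref{eq:stoc_ng_input} is a polytope, hence convex, by the polytopic-input assumption. Constraint \eqref{eq:stoc_ng_nonneg} is affine in $\overline{U}$, because $\epsilon$ is fixed and therefore $\Phi_{\overline{p}_i^\top G\bW}^{-1}(\epsilon)$ is a constant. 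The key step is \eqref{eq:stoc_ng_ICC}: Lemma~\ref{lem:logconcave_cdf} gives that $\log\Phi_{\overline{p}_i^\top G\bW}$ is concave on $\mathbb{R}$, and composing with the affine map $\overline{U}\mapsto d_i - \overline{p}_i^\top H\overline{U}$ preserves concavity, so $\log\Phi_{\overline{p}_i^\top G\bW}(d_i - \overline{p}_i^\top H\overline{U}) - t_i$ is concave in $(\overline{U},t_i)$ and \eqref{eq:stoc_ng_ICC} is a convex constraint; the box \eqref{eq:stoc_ng_t_i} is trivially convex. Finally I would rewrite \eqref{eq:stoc_ng_ralloc} as $\log\bigl(\sum_{i=1}^{L_X}\exp(t_i)\bigr) - \log(L_X-\Delta) \ge 0$: the log-sum-exp function is convex, so the left-hand side is convex, and the constraint is reverse convex in the sense recalled before the theorem.

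I expect the only real obstacle to be the bookkeeping in part~1 — checking that the change of variables \eqref{eq:ti_defn} maps the feasible region of \eqref{prob:stoc_ng} exactly onto \eqref{eq:CDF_cc}, and in particular that \eqref{eq:stoc_ng_nonneg} is precisely what keeps $\Phi_{\overline{p}_i^\top G\bW}$ evaluated where its logarithm is finite — and, for part~2, the observation that the convexity of \eqref{eq:stoc_ng_ICC} rests entirely on Lemma~\ref{lem:logconcave_cdf}; without log-concavity of the disturbance that constraint has no reason to be convex. Everything else is routine.
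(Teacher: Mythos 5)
Your proposal is correct and follows essentially the same route as the paper's own proof: part~1 inverts the change of variables \eqref{eq:ti_defn} to recover the risk-allocation constraints \eqref{eq:CDF_cc} (with the same algebraic equivalence between \eqref{eq:stoc_ng_ralloc} and \eqref{eq:CDF_cc_b}) and then appeals to the Boole's-inequality chain, while part~2 checks each constraint exactly as the paper does, with Lemma~\ref{lem:logconcave_cdf} carrying the convexity of \eqref{eq:stoc_ng_ICC} and log-sum-exp convexity giving the reverse-convex nature of \eqref{eq:stoc_ng_ralloc}. Your added remarks (the role of \eqref{eq:stoc_ng_nonneg} in keeping the logarithm finite, and treating the CDF via monotonicity) are consistent elaborations of the same argument rather than a different approach.
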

\begin{proof}
    \emph{1)} We observe that the constraints
    \eqref{eq:stoc_input} and \eqref{eq:stoc_ng_input} are
    identical. We need to show that satisfaction of
    \eqref{eq:stoc_ng_nonneg}--\eqref{eq:stoc_ng_ralloc}
    satisfies \eqref{eq:stoc_sc}. Recall that the collection
    of constraints \eqref{eq:CDF_cc} tighten
    \eqref{eq:stoc_sc}. Therefore, it is sufficient to show
    that the satisfaction of constraints
    \eqref{eq:stoc_ng_nonneg}--\eqref{eq:stoc_ng_ralloc}
    guarantee satisfaction of \eqref{eq:CDF_cc}.

    The constraint \eqref{eq:stoc_ng_nonneg}
    ensures that the constraint \eqref{eq:stoc_ng_ICC} is
    well-defined, since the satisfaction of
    \eqref{eq:stoc_ng_nonneg} ensures that
    $\Phi_{\overline{p}_i^\top G\bW}(d_i -
    \overline{p}_i^\top H\overline{U})$ is
    positive. The satisfaction of
    \eqref{eq:stoc_ng_ICC} implies satisfaction of
    \eqref{eq:CDF_cc_a}. The satisfaction
    of \eqref{eq:stoc_ng_t_i} implies that $\delta_i \in
    [0,\Delta]$ by \eqref{eq:ti_defn}.
    Finally, 
    we show that \eqref{eq:stoc_ng_ralloc} and
    \eqref{eq:CDF_cc_b} are equivalent via simple
    algebraic manipulations,
    \begin{subequations}
    \begin{align}
         \sum_{i=1}^{L_X}\delta_i \leq
         \Delta&\Leftrightarrow L_X -
         \sum_{i=1}^{L_X}(1-\delta_i) \leq \Delta\\
               &\Leftrightarrow
               \log\left(\sum_{i=1}^{L_X}\exp(t_i)\right)
               \geq \log\left(L_X - \Delta\right)
    \end{align}
    \end{subequations}
    In other words, every feasible solution $(\overline{U},
    \overline{t})$ of \eqref{prob:stoc_ng} maps to a
    feasible solution to \eqref{eq:CDF_cc} with $
    \overline{U}\in \mathcal{U}^N$, and thereby is
    feasible for \eqref{prob:stoc}.

    \emph{Proof of 2)} We already know that the cost
    \eqref{eq:stoc_ng_cost} is a convex quadratic function
    of $ \overline{U}$. By construction, the
    constraints \eqref{eq:stoc_ng_input},
    \eqref{eq:stoc_ng_nonneg}, and \eqref{eq:stoc_ng_t_i}
    are linear constraints in $ \overline{U}$ and $
    \overline{t}$. The convexity of \eqref{eq:stoc_ng_ICC}
    follows from Lemma~\ref{lem:logconcave_cdf} and the
    definition of log-concavity. Recall that
    $\log\left(\sum_{i=1}^{L_X}\exp(t_i)\right)$ is a convex
    function in $\overline{t}$~\cite[Sec.
    3.1.5]{BoydConvex2004}, which shows that
    \eqref{eq:stoc_ng_ralloc} is a reverse-convex
    constraint.
\end{proof}

\subsection{Conic reformulation of \eqref{eq:stoc_ng_ICC} via piecewise affine approximation}
\label{sub:pwa_conic}

We now focus on enforcing the
convex constraint \eqref{eq:stoc_ng_ICC}. Despite its
convexity, the constraint \eqref{eq:stoc_ng_ICC} is not a
conic constraint, which prevents the use of
standard conic solvers in its current form. We present a
tight conic reformulation of \eqref{eq:stoc_ng_ICC} using
piecewise affine approximations. 

Given a concave function $f: \mathcal{D} \to \mathcal{R}$
for bounded intervals $ \mathcal{D}, \mathcal{R} \subset \mathbb{R}$, we
define its piecewise affine underapproximation as
$\ell_f^-: \mathbb{R} \rightarrow \mathbb{R}$ for some
$\UPWAm_{j},\UPWAc_{j}\in \mathbb{R}$ for $j\in
\mathbb{N}_{[1,N_f]}$ and $N_f \in \mathbb{N}$ distinct affine elements,
\begin{equation}
    \ell^-_{f}(x)\triangleq \min_{j\in \mathbb{N}_{[1,N_f]}}
    (\UPWAm_{j} x + \UPWAc_{j}) \label{eq:ell_defn_f}.
\end{equation}

\begin{figure}
    \centering
    \includegraphics[width=\linewidth]{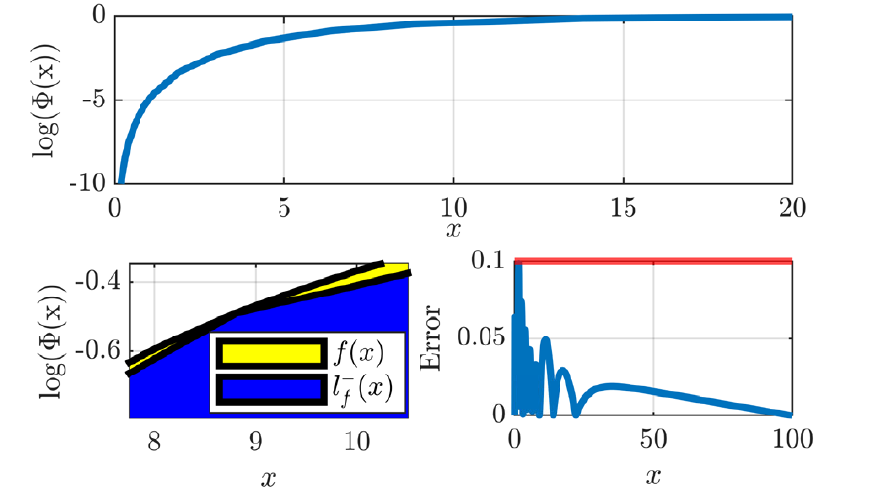}
    \caption{A piecewise affine underapproximation (blue) of the log of the cumulative distribution function of an affine transformation of a random vector (yellow) $a^{\top}\bw_t$ where $\bw_t = [\bw_1\ \bw_2\ \bw_3]^{\top}\in\mathbb{R}^3$ where the scale parameters are $\overline{\lambda}_{\bw}(k) = {[0.5\ 0.25\ 0.1667]}^\top$. The underapproximation is obtained via the sandwich algorithm (Appendix~\ref{ap:pwa}).}
    \label{fig:PWAFit}
\end{figure}

For a user specified approximation error $\eta > 0$,
Appendix~\ref{ap:pwa} describes the {sandwich
algorithm}~\cite{rote1992convergence} that computes
$\ell_f^-$ for a concave $f$ such that
\begin{equation}
    \ell_f^-(x)\leq f(x) \leq \ell_f^-(x) + \eta\label{eq:ell_defn_approx}.
\end{equation}

In \eqref{prob:stoc_ng}, we use the piecewise affine
{underapproximation} of the concave functions $f_i=\log\left(\Phi_{
\overline{p}_i^\top G\bW}\right)$  with $N_i \in \mathbb{N}$
distinct pieces for every $i\in
\mathbb{N}_{[1, L_X]}$ to conservatively enforce
\eqref{eq:stoc_ng_ICC}. The functions $f_i$ have
bounded domain and range in $\mathbb{R}$ due to 
\eqref{eq:stoc_ng_nonneg}. We evaluate $f_i$
using the one-dimensional numerical integration of
characteristic functions, as discussed in \eqref{eq:G_FT}.
We obtain the following optimization problem, 
\begin{subequations}\label{prob:stoc_dc}
    \newcommand{\optSpace}{0.6em}
    \begin{align}
        \underset{\overline{U},\overline{t}}{\mathrm{minimize}}&\hspace*{\optSpace}  {(\mubXU - \Xd)}^\top Q{(\mubXU -  \Xd)}+ \overline{U}^\top R \overline{U}\nonumber\\
        &\quad + \mathrm{tr}(Q \CbXU)\label{eq:stoc_dc_cost}\\
        \mathrm{subject\ to}
        &\hspace*{\optSpace} \overline{U}\in
        \mathcal{U}^{N}\label{eq:stoc_dc_input}\\
\forall i\in \mathbb{N}_{[1,L_X]}, &\hspace*{\optSpace}
    \overline{p}_i^\top H\overline{U} +
    \Phi_{\overline{p}_i^\top G\bW}^{-1}(\epsilon) \leq d_i \label{eq:stoc_dc_nonneg}\\
\substack{\forall i\in\mathbb{N}_{[1,L_X]}\\\forall
j\in\mathbb{N}_{[1,N_i]}}, &\hspace*{\optSpace}
\UPWAm_{i,j} \left({d_i -
    \overline{p}_i^\top H\overline{U}}\right) + \UPWAc_{i,j}\geq t_i \label{eq:stoc_dc_ICC}\\
        \forall i\in\mathbb{N}_{[1,L_X]},
                                  &\hspace*{\optSpace}
                                  t_i\in[\log(1-\Delta),0]\label{eq:stoc_dc_t_i}\\
        & \hspace*{\optSpace}
        \log\left(\sum_{i=1}^{L_X}\exp(t_i)\right)\geq
        \log(L_X-\Delta).\label{eq:stoc_dc_ralloc}
        \end{align}%
\end{subequations}%
By Theorem~\ref{thm:feas1} and the use of piecewise affine
underapproximations of $\log(\Phi_{ \overline{p}_i^\top
G\bW})$, every feasible solution of \eqref{prob:stoc_dc} is
feasible for \eqref{prob:stoc_ng}, and thereby
\eqref{prob:stoc}. 

\subsection{Solving \eqref{prob:stoc_dc} via difference of
convex programming}
\label{sub:dc}

The optimization problem \eqref{prob:stoc_dc} has a
quadratic cost \eqref{eq:stoc_dc_cost}, linear constraints
\eqref{eq:stoc_dc_input}--\eqref{eq:stoc_dc_t_i} in the decision variables
$ \overline{U}$ and $ \overline{t}$, and a single
reverse-convex constraint \eqref{eq:stoc_dc_ralloc}. We now
discuss a tractable solution to \eqref{prob:stoc_dc} using
difference of convex
programming~\cite{lipp_variations_2016}. 

Difference of convex programs are non-convex optimization
problems of the form,
\begin{align}
    \begin{array}{rl}
        \underset{x \in \mathbb{R}^n}{\mathrm{minimize}}&\quad f( \overline{x})
        - g( \overline{x})\\
        \mathrm{subject\ to}&\quad f_i( \overline{x}) - g_i(
        \overline{x}) \leq 0,\quad \forall i\in \mathbb{N}_{[1,M]}\\
    \end{array}\label{prob:dc_ex}
\end{align}
where $f, g$ and $f_i,g_i$ are convex for $i\in \mathbb{N}_{[1,M]}$,
$M\in \mathbb{N}$. The {penalty based convex-concave
procedure}~\cite{lipp_variations_2016} solves
\eqref{prob:dc_ex} in a sequential convex optimization based
approach starting from a potentially infeasible initial
guess. See Appendix~\ref{ap:dc} and
\cite{lipp_variations_2016,horst2000introduction} for more details.

Given the current estimate for the risk allocation
$\overline{r}=[r_1\ \ldots\ r_{L_X}]\in
[0,1]^{L_X}$, the penalty based convex-concave
procedure solves the following {convex}
approximation of \eqref{prob:stoc_dc} at every iteration,
\begin{subequations}
    \newcommand{\optSpace}{0.6em}
    \begin{align}
        \underset{\overline{U},\overline{t},
        s}{\mathrm{minimize}}&\hspace*{\optSpace}
        {(\mubXU - \Xd)}^\top Q{(\mubXU -  \Xd)}+
        \overline{U}^\top R \overline{U} \nonumber \\
                             &\hspace*{6em}+ \mathrm{tr}(Q \CbXU)+ \tau_k s\label{eq:stoc_dc_relaxed_cost}\\
        \mathrm{subject\ to}
        &\hspace*{\optSpace} \overline{U}\in
        \mathcal{U}^{N},\ s\geq 0\label{eq:stoc_dc_relaxed_input}\\
\forall i\in \mathbb{N}_{[1,L_X]}, &\hspace*{\optSpace}
\overline{p}_i^\top H\overline{U} +
    \Phi_{\overline{p}_i^\top G\bW}^{-1}(\epsilon) \leq d_i \label{eq:stoc_dc_relaxed_nonneg}\\
\substack{\forall i\in\mathbb{N}_{[1,L_X]}\\\forall
j\in\mathbb{N}_{[1,N_i]}}, &\hspace*{\optSpace}
\UPWAm_{i,j} \left({d_i -
    \overline{p}_i^\top H\overline{U}}\right) + \UPWAc_{i,j}\geq t_i \label{eq:stoc_dc_relaxed_ICC}\\
        \forall i\in\mathbb{N}_{[1,L_X]},
                                  &\hspace*{\optSpace}
                                  t_i\in[\log(1-\Delta),0]\label{eq:stoc_dc_relaxed_t_i}\\
        &
        \begin{array}{l}
        \log\left(\sum_{i=1}^{L_X}\exp(r_i)\right)
        \\[1ex]
            +\frac{1}{\sum_{i=1}^{L_X}\exp(r_i)}\sum_{i=1}^{L_X}\exp(r_i)
            (t_i - r_i)\\[1ex] 
        \hspace*{5em}+ s\geq \log(L_X-\Delta)
        \end{array}\label{eq:stoc_dc_relaxed_ralloc}
        \end{align}\label{prob:stoc_dc_relaxed}%
\end{subequations}%
where $\tau_k\geq 0$ for $k\in \mathbb{N}$ are optimization
hyperparameters. The
constraint \eqref{eq:stoc_dc_relaxed_ralloc} corresponds to the
first-order approximation of the reverse-convex constraint
\eqref{eq:stoc_dc_ralloc}, which is relaxed by a scalar slack
variable $s$. We penalize the slack variable
$s$ in the objective \eqref{eq:stoc_dc_relaxed_cost}. We
know \eqref{prob:stoc_dc_relaxed} is convex, since
\eqref{eq:stoc_dc_relaxed_ralloc} is a linear constraint in
$ \overline{t}$ and $s$, and all other constraints and the
objective are convex (Theorem~\ref{thm:feas1}.b). 

Starting with an
arbitrary risk allocation $ \overline{r}_0 \in
{[0,1]}^{L_X}$, we iteratively
solve \eqref{prob:stoc_dc_relaxed} with monotonically
increasing values of $\tau_k$ to promote feasibility. In the
numerical experiments, we chose a uniform risk allocation $
\overline{r}_0=\frac{\Delta}{L_X} \overline{1}_{L_X}$, where
$\overline{1}_{L_X}$ is a $L_X$-dimensional vector of ones.
See Appendix~\ref{ap:dc} for more details on the sequence
${\{\tau_k\}}_{k\geq 1}$ and the stopping conditions for the
penalty based convex-concave procedure.

In summary, we have decomposed the original stochastic
optimal control problem presented in \eqref{prob:stoc} into a convex quadratic problem, via the steps shown in Figure~\ref{fig:restrict}.  
We first employed risk allocation \eqref{prob:stoc_ng}, then we
converted the non-conic convex constraints present in
\eqref{prob:stoc_ng} into conic convex constraints
using piecewise affine approximations as well as the Fourier
transform. Finally, we utilize difference-of-convex
programming to tackle the remaining reverse convex
constraint \eqref{eq:stoc_ng_ralloc}. Thus, our approach
solves a convex (quadratic) program 
\eqref{prob:stoc_dc_relaxed} 
iteratively to compute a local optimum of \eqref{prob:stoc}.
Figure~\ref{fig:restrict} summarizes the resulting convex
optimization problems.

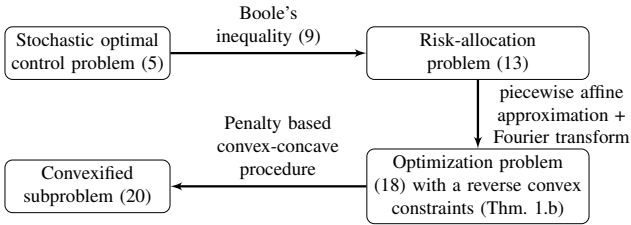
\begin{figure}[ht!]
    \centering
    \adjustbox{width=1\linewidth}{
    \tikzstyle{block} = [rectangle, draw, text width=5em, text centered, rounded corners, minimum height=2em]
    \tikzstyle{line} = [draw, -latex', very thick]
    \tikzstyle{doublearrow} = [draw, latex'-latex', very thick]
    \begin{tikzpicture}[node distance = 10em, auto]
        \node [block, text width=8em] (orig) {Stochastic optimal control problem \eqref{prob:stoc}};
        
        \node [block, text width=11em, xshift=10.5em, right of=orig] (risk) {Risk-allocation problem \eqref{prob:stoc_ng}};
        
        \node [block, text width=11em, yshift= 3em, below of=risk] (log) {Optimization problem \eqref{prob:stoc_dc} with a reverse convex constraints (Thm.~\ref{thm:feas1}.b)};
        
        \node [block, text width=8em, xshift=-10.5em, left of=log] (relaxed) {Convexified subproblem \eqref{prob:stoc_dc_relaxed}};
        \path [line] (orig) -- (risk) node [text centered, above, midway, text width=8em] {Boole's inequality \eqref{eq:CDF_cc}}; 
        
        \path [line] (log) -- (relaxed) node [text centered, above, midway, text width=7.2em, xshift=0.5em] {Penalty based convex-concave procedure};
        
        \path [line] (risk) -- (log) node [text centered,
            right, midway, text width=8em] {piecewise affine
            approximation + Fourier transform};
        
    \end{tikzpicture}}
    \caption{Flow resulting in the convexified problem to solve original problem using standard solvers.}
    \label{fig:restrict}
\end{figure} 

\section{Extensions and special cases}
\label{sec:extensions}

\subsection{Random initial state}

We now consider the effect of a random initial state
$\bx(0)$, which is assumed to be statistically independent
from $\bW$ Similar to \eqref{prob:stoc_dc}, we can use risk
allocation, Fourier transformations, and piecewise affine
approximations to formulate an optimization problem, that
can be solved via penalty based convex-concave procedure.

Let $\Psi_{\bx}$ be the characteristic function of $\bx(0)$.
Define a new random vector $\bZ=\bar{A}\bx(0) + G\bW$. We
have the characteristic function of $\bZ$ in closed-form
with the Fourier variable $ \overline{\beta} \in
\mathbb{R}^{nN}$,
\begin{align}
 \Psi_{\bZ}(\overline{\beta})&=\Psi_{\bx}\left(\bar{A}^\top
    \overline{\beta}\right)\Psi_{\bW}\left(G^\top
\overline{\beta}\right)\label{eq:Psi_bZ}.
\end{align}
Next, we formulate the risk-allocation based constraints on
$\bZ$ to conservatively enforce the soft state constraint
\eqref{eq:stoc_sc}, 
\begin{subequations}
\begin{align}
    \Phi_{ \overline{p}_i^{\top}\bZ}\left(q_i-
 \overline{p}_i^{\top}H
\overline{U}\right) &\geq  1-\delta_i&\forall i \in
    \mathbb{N}_{[1,L_X]},\label{eq:CDF_cc_a_random_x}\\
                                     \sum\nolimits_{i=1}^{L_X}
\delta_i\leq\Delta ,\ \delta_i&\in [0,\Delta],&\forall i \in
\mathbb{N}_{[1,L_X]}.\label{eq:CDF_cc_b_random_x}
\end{align}\label{eq:CDF_cc_random_x}%
\end{subequations}%
Here, we compute $\Phi_{ \overline{p}_i^{\top}\bZ}$ using
\eqref{eq:G_FT} and \eqref{eq:Psi_bZ}. The satisfaction of
\eqref{eq:CDF_cc_random_x} for any feasible controller $
\overline{U}\in \mathcal{U}^N$ and risk allocation $
\overline{\delta}$ implies that $\ProbbXU\{\bX\in
\mathscr{S}\}\geq 1 - \Delta$. In contrast to
\eqref{eq:CDF_cc_a}, \eqref{eq:CDF_cc_a_random_x} has a
different term in the left hand side since the initial state is now
random.

Finally, we complete the optimization problem formulation
using characteristic functions (Sections~\ref{sub:cf}) and piecewise affine underapproximations 
(Section~\ref{sub:pwa_conic}), 
\begin{subequations}
    \newcommand{\optSpace}{0.6em}
    \begin{align}
        \underset{\overline{U},\overline{t}}{\mathrm{minimize}}&\hspace*{\optSpace}  {(\mubXU - \Xd)}^\top Q{(\mubXU -  \Xd)}+ \overline{U}^\top R \overline{U}\nonumber\\
        &\quad + \mathrm{tr}(Q \CbXU)\label{eq:stoc_dc_cost}\\
        \mathrm{subject\ to} &\hspace*{\optSpace}
        \eqref{eq:stoc_dc_input},\ \eqref{eq:stoc_dc_t_i},\
    \eqref{eq:stoc_dc_ralloc}\label{eq:stoc_dc_random_prev}\\
\forall i\in \mathbb{N}_{[1,L_X]}, &\hspace*{\optSpace}
    \overline{p}_i^\top H\overline{U} +
    \Phi_{\overline{p}_i^\top \bZ}^{-1}(\epsilon) \leq q_i\\
\substack{\forall i\in\mathbb{N}_{[1,L_X]}\\\forall
j\in\mathbb{N}_{[1,N_i]}}, &\hspace*{\optSpace}
\UPWAm_{i,j,\bZ} \left({q_i -
    \overline{p}_i^\top H\overline{U}}\right) +
    \UPWAc_{i,j,\bZ}\geq t_i,
        \end{align}\label{prob:stoc_dc_random}%
\end{subequations}%
where $\ell_{f_{\bZ,i}}^-(x)=\min_{j\in
\mathbb{N}_{[1,N_i]}} \left(\UPWAm_{i,j,\bZ} x +
\UPWAc_{i,j,\bZ}\right)$ is the piecewise affine
underapproximation of the concave function
$f_{\bZ,i}(x)=\log(\Phi_{ \overline{p}_i^{\top}\bZ}(x))$.
The optimization problem \eqref{prob:stoc_dc_random} imposes
constraints on the random variable $ \overline{p}_i^\top
\bZ$. In contrast, \eqref{eq:stoc_dc_nonneg} and
\eqref{eq:stoc_dc_ICC} imposed constraints on the random
variable $ \overline{p}_i^\top G\bW$ because the initial
state in \eqref{prob:stoc_dc} was deterministic.

We use sandwich algorithm (Appendix~\ref{ap:pwa}, Algorithm~\ref{algo:approx}) to compute $\ell_{f_{\bZ,i}}^-$.
Similarly to Lemma~\ref{lem:logconcave_cdf}, $\Phi_{
\overline{p}_i^{\top}\bZ}$ is a log-concave function when
$\bx(0)$ and $\bW$ have log-concave probability density
function~\cite[Thm. 4.2.1]{prekopa1995stochastic}.
Similarly to Theorem~\ref{thm:feas1}, the optimization
problem \eqref{prob:stoc_dc_random} has a convex objective
and convex constraints, except for a reverse convex
constraint \eqref{eq:stoc_dc_ralloc} in
\eqref{eq:stoc_dc_random_prev}. Thus,
\eqref{prob:stoc_dc_random} can also be solved using
penalty based convex-concave procedure, similarly to
\eqref{prob:stoc_dc}.


\subsection{Gaussian disturbance $\bW$: Risk allocation and
controller synthesis via a single quadratic program for $\Delta
\leq 0.5$}

For a Gaussian disturbance $\bW$, existing literature solves
the optimal control problem \eqref{prob:stoc} via the
following approximation, 
\begin{subequations}
    \begin{align}
        \underset{\overline{U}, \overline{\delta}}{\mathrm{min}}&\quad {(\mubXU - \Xd)}^\top Q{(\mubXU -  \Xd)}+ \overline{U}^\top R \overline{U}\nonumber\\
        &\quad + \mathrm{tr}(Q \CbXU)\label{eq:Ono_CDFinv_cost}\\
        \mathrm{s.t.}&\quad
        \Delta\geq\sum\nolimits_{i=1}^{L_X} \delta_i, \ 
        \eqref{eq:X_U_mu},\ \eqref{eq:X_U_cov},\ \overline{U}\in \mathcal{U}^{N}\label{eq:Ono_CDFinv_others}\\
                     &\quad \delta_i\in [0,\Delta],
                     \hspace*{7.2em}\forall i\in
                     \mathbb{N}_{[1,{L_X}]},\label{eq:Ono_CDFinv_delta}\\
                            &\ \ \begin{array}{l}
                                \overline{p}_i^\top H
                                \overline{U}\leq q_i -
                                {\Vert \CbXU^\frac{1}{2}
                                \overline{p}_i\Vert}_2\PhiGaussian^{-1}\left(1
                            - \delta_i \right),\\
                            \end{array} \nonumber \\
                            &\hspace*{13em}\forall i\in \mathbb{N}_{[1,{L_X}]},\label{eq:Ono_CDFinv_CDF}
    \end{align}\label{prob:Ono_CDFinv}%
\end{subequations}%
where $\PhiGaussian^{-1}(\cdot)$ is the inverse cumulative
distribution function. The reformulation
\eqref{prob:Ono_CDFinv} is obtained via risk allocation and
Gaussian random vector properties~\cite{ono2008iterative}.
While \eqref{prob:Ono_CDFinv} is known to be convex
when $\Delta\leq 0.5$, existing approaches solve
\eqref{prob:Ono_CDFinv} via coordinate-descent based
approaches, since \eqref{eq:Ono_CDFinv_CDF} is a 
non-conic constraint. 

Similarly to Section~\ref{sub:pwa_conic}, we can use
piecewise affine approximation to tightly approximate the
convex, non-conic constraint \eqref{eq:Ono_CDFinv_CDF}. Let

\begin{equation}
 \ell_f^-=\min_{j\in
    \mathbb{N}_{\left[1,N_{\Phi}\right]}} (\UPWAm_{j} z +
\UPWAc_{j})\leq f_\Phi(z)\triangleq-\PhiGaussian^{-1}(1-z)   
\end{equation}
be the piecewise affine underapproximation of the concave,
differentiable function $f$ with $N_\Phi \in \mathbb{N}$
distinct pieces. We restrict $z\in[\deltalb,
\Delta]$ for some small $\deltalb>0$ to ensure bounded
domain and range for $f$. We can construct $\ell_{f_\Phi}^-$
using the sandwich algorithm (Appendix~\ref{ap:pwa},
Algorithm~\ref{algo:approx}) since $-\PhiGaussian^{-1}(1-z)$
is concave for $z\in[\deltalb,
\Delta]$.  Consequently, any $
\overline{U} \in \mathbb{R}^{mN}$ and $
\overline{\delta}\in\mathbb{R}^{L_X}$ that satisfies
\begin{align}
\overline{p}_i^\top H \overline{U}\leq q_i + {\Vert
                            \CbXU^\frac{1}{2}
                        \overline{p}_i\Vert}_2\left(\UPWAm_{j} \delta_i +
                    \UPWAc_{j}\right),
                    \label{eq:OnoApprox}
\end{align}
for every $ i \in \mathbb{N}_{[1,L_X]}$ and $j \in
\mathbb{N}_{[1, N_{\Phi}]}$ satisfies \eqref{eq:Ono_CDFinv_CDF}. We
obtain a conservative solution to \eqref{prob:stoc} for a
Gaussian disturbance $\bW$ by solving the following
quadratic program,
\begin{subequations}
    \begin{align}
        \underset{\overline{U}, \overline{\delta}}{\mathrm{minimize}}&\quad{(\mubXU - \Xd)}^\top Q{(\mubXU -  \Xd)}+ \overline{U}^\top R \overline{U}\nonumber\\
        &\quad + \mathrm{tr}(Q \CbXU) \label{eq:Ono_CDFinv_pwl_cost}\\
        \mathrm{subject\ to}&\quad 
        \Delta\geq\sum\nolimits_{i=1}^{L_X} \delta_i, \ 
        \eqref{eq:X_U_mu},\ \eqref{eq:X_U_cov},\
        \overline{U}\in
        \mathcal{U}^{N}\label{eq:Ono_CDFinv_pwl_others}\\
   						    &\quad \delta_i\in
                            [\deltalb,\Delta],\hspace{5em}\forall i\in
                            \mathbb{N}_{[1,{L_X}]}\label{eq:Ono_CDFinv_pwl_delta}\\
                            &\quad \overline{p}_i^\top H \overline{U}\leq q_i + {\Vert
                            \CbXU^\frac{1}{2}
                        \overline{p}_i\Vert}_2\left(\UPWAm_{j} \delta_i +
                    \UPWAc_{j}\right), \nonumber \\
                            &\hspace{6em}\forall i\in
                            \mathbb{N}_{[1,{L_X}]},\forall
                            j\in
                            \mathbb{N}_{[1,N_{\Phi}]}\label{eq:Ono_CDFinv_pwl_CDF}.
    \end{align}\label{prob:Ono_CDFinv_pwl}%
\end{subequations}%
In contrast to existing coordinate-descent based approaches,
we can now use standard quadratic program solvers to solve
\eqref{prob:Ono_CDFinv_pwl} efficiently. See our prior
work~\cite{vinod2019piecewise} for more details.

\section{Numerical Examples}
\label{sec:examples}

We apply the proposed approach on two examples: 1) a
stochastic double integrator, and 2) a
quadrotor in a harsh environment, with crosswind. We also
compare the performance of the controller produced by our
approach to: 1) a particle based approach~\cite{blackmore2011chance}, and 2) a moment based approach~\cite{paulson2017stochastic}. 
We measure the performance of the controllers based on the attained cost,
probability of constraint satisfaction, and computational time. We also used a Monte-Carlo simulation with $10^5$ samples for validation.

All computations are done with MATLAB on an Intel Xeon CPU with 3.80 GHz clock rate
and 32GB RAM. We implemented our algorithm and the particle based approach in 
CVX~\cite{cvx} with Gurobi~\cite{gurobi}. We used
\texttt{fmincon} and CVX to implement the moment based
approach.  We used MPT~\cite{MPT3} and
SReachTools~\cite{SReachTools} for the stochastic optimal
control problem formulation.

For the implementation of the proposed approach via
difference-of-convex programming, we set $\tau_{max} = 10000$,
$\tau_{0} = 0.1$, and $\epsilon_{viol} = 1.2$, and for the termination criteria
we used 100 iterations or $\epsilon_{dc} = 1 \times 10^{-6}$. 
For the sandwich algorithm, we chose
$\eta= 0.1$.

The particle based approach constructs an open-loop controller that solves \eqref{prob:stoc} approximately via mixed-integer programming~\cite{blackmore2011chance}. Specifically, we draw samples (particles) of the disturbance random vector $\bW$ and utilize the particle based approximation of the state constraint probability as well as the expected cost to construct a particle based approximation of \eqref{prob:stoc}. This approach recovers the optimal open-loop controller for \eqref{prob:stoc} as the number of particles considered increases, at the penalty of increased computational time. In the numerical experiments, we used $50$ particles, and reported the average from three separate runs.

The moment based approach constructs an affine-feedback controller via coordinate-descent based optimization~\cite{paulson2017stochastic}. It enforces the chance constraint on the state via concentration inequalities, specifically the Chebyshev-Cantelli inequality. The moment based approach utilizes only
the first and the second moment of the disturbance random vector $\bW$, resulting in a high-degree of conservatism compared to the proposed approach. We also use the moment based approach to generate an open-loop controller by setting the gain matrix (a decision variable) to zero.

\subsection{Constrained control of a stochastic double integrator}\label{sub:di}

We first consider a double integrator system,
\begin{equation}
    \bx(k+1) = \begin{bmatrix}
		1 & T_s \\ 
        0 & 1\\
    \end{bmatrix}\bx(k)+ \begin{bmatrix}
  	\frac{ T_s^2}{2}\\ T_s
\end{bmatrix} \overline{u}(k) + \bw(k) \label{eq:doub_int_dyn}
\end{equation} 
with state $\bx(k)\in \mathbb{R}^2$, input set $
\mathcal{U}=[-20,20]$, exponential disturbance $\bw(k)$ with
scale $\overline{\lambda}_{\bw}(k) \in \mathbb{R}^2_+$,
sampling time $T_s = 0.25$s, and initial position
$\overline{x}(0)=[-1\ 0]^\top$.

We seek to solve a constrained optimal control problem subject to dynamics \eqref{eq:doub_int_dyn}, with
quadratic cost \eqref{eq:stoc_cost} that  encodes our desire
to track $\Xd\in
\mathbb{R}^{n{N}}$, penalize high velocities, and minimize
control effort. Specifically, we choose $Q=
\text{diag}([10\; 1])\otimes I_{(nN)\times (nN)}$, $R =
10^{-3} I_{(mN)\times (mN)}$, ${(\Xd)}_t= {[m_r t + c_r\
0]}^\top,\ \forall t\in \mathbb{N}_{[0,N]}$, and set problem
parameters $m_1,m_2,m_r, c_1,c_2, c_r$ as
$0.222,-0.222,-0.111,-5.222,5.222,$ and $2.111$
respectively.
We define the time varying state constraints as
\begin{align}
    \mathscr{T} &= \left\{ (t, \overline{x})\in \mathbb{N}_{[0,N]}\times
    \mathbb{R}^2 : m_1 t + c_1 \leq \overline{x}_1 \leq m_2 t + c_2\right\}.
    \nonumber 
\end{align}
and wish to maintain constraint satisfaction of $90\%$, i.e. $\Delta = 0.1$.

\subsubsection{Constant Time Horizon, Exponential Distribution}\label{subsub:fixedexp}

\begin{figure}[h!]
    \centering
    \includegraphics[width=\linewidth]{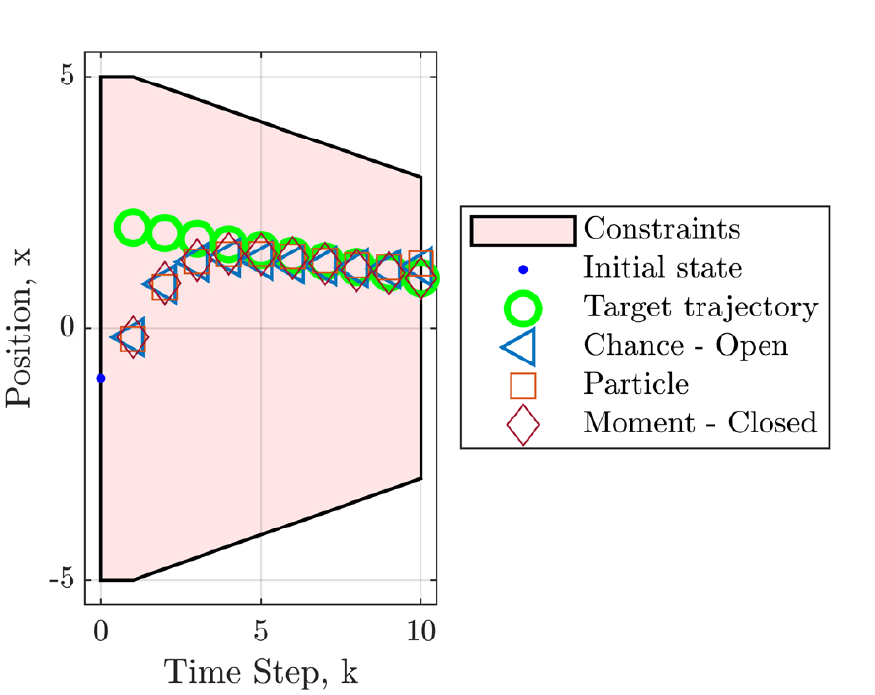}
    \caption{Mean trajectories from our approach, the particle based approach, and the affine feedback moment based approach. All approaches compute a controller that maintains the constraint violation threshold ($\Delta = 0.1$). The affine feedback moment approach tracks the desired trajectory closest, while our approach computes the controller fastest, and has lower constraint violation (Table~\ref{tb:DI}). The open-loop moment based approach failed to find a controller.}
    \label{fig:DI_Mean}
\end{figure}

\begin{figure}
    \centering
    \includegraphics[width=\linewidth]{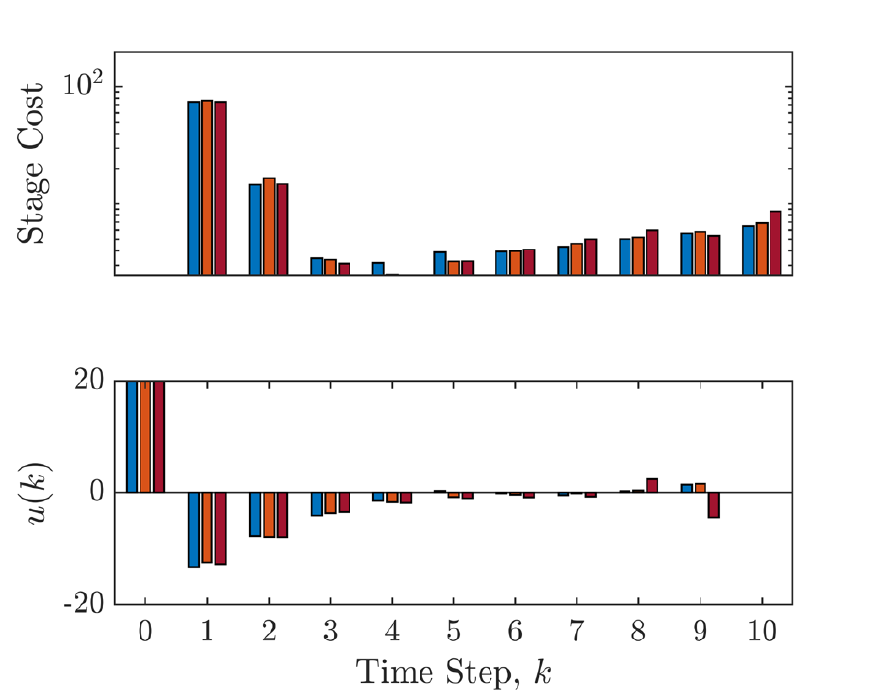}
    \caption{Stage cost (the cost incurred at each time step) and control effort as a function of time for the constant time double integrator. The stage cost for our approach (blue) is higher than the particle based (orange) and moment based affine approach (maroon). The benefit of affine feedback is seen in the control plot for the moment based approach, where $k = 8,9$ show aggressive corrections compared to the other approaches.}
    \label{fig:DI_StageCI}
\end{figure}

We compute optimal control trajectories using our approach, the particle filter and both open and closed loop moment based approaches for a fixed horizon $N=10$ and scale parameter $\overline{\lambda}_{\bw}(k) = {[5\ 10]}^\top$. Figure~\ref{fig:DI_Mean} shows the optimal trajectories for all but the open-loop moment based approach, which failed to compute an optimal trajectory.
Figure~\ref{fig:DI_StageCI} shows that the stage cost (the cost at each time step) is similar amongst both the particle based approach and the affine feedback moment based approach, with a higher cost for our approach. 

\begin{figure}[h!]
    \centering
    \includegraphics[width=\linewidth]{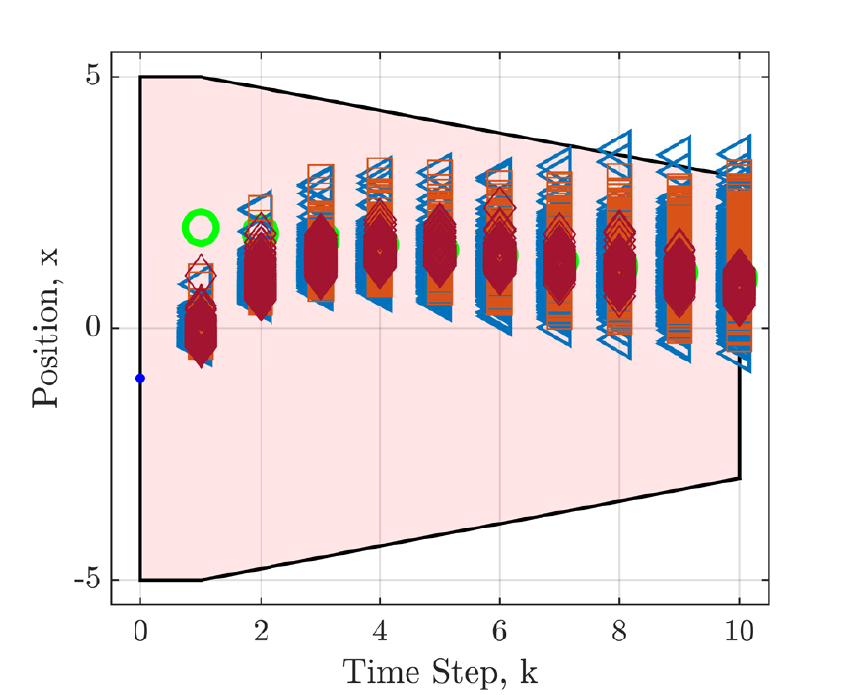}
    \caption{Selected Monte Carlo trajectories for the double integrator.  The affine feedback moment approach (maroon) performs well compared to our approach (blue) and the particle based approach (orange).  However, the computation time is significantly higher than the other approaches (Table~\ref{tb:DI}).  Our approach has less probabilistic constraint violation than the particle based approach.}
    \label{fig:DI_MC}
\end{figure}

While all the methods generated similar trajectories, the key differences can be seen in Table~\ref{tb:DI}, which compares the computed values of the cost and probability of satisfaction to their Monte Carlo estimates for $10^5$ simulated trajectories. 
The particle based approach is able to compute an open-loop controller the fastest using 50 particles, but the constraint violation is lower than the Monte Carlo (MC) estimate of violation. 
On the other hand, the affine feedback moment based approach computes a constraint violation of 0.907, but the Monte Carlo estimate of the constraint violation is 1. 
This is can be seen in Figure~\ref{fig:DI_MC}, which shows a fraction of the Monte Carlo trajectories for all the approaches, where the majority of the affine feedback moment based approach trajectories are well contained in the set.
The benefit of affine feedback is clearly seen in the
control effort in Figure~\ref{fig:DI_StageCI}, where the moment based affine approach provides an input at $k=9$ to maintain a majority of the trajectories.

\begin{table}[t!]
    \centering
    \caption{Double Integrator example: Cost and constraint satisfaction ($1-\Delta$) for both computed (Comp) and Monte Carlo (MC) simulation based validation ($10^5$ samples) for our approach, the particle based approach, and the affine feedback moment based approach. The open-loop moment based approach failed to find a controller. Our approach has higher Monte Carlo constraint satisfaction compared to the particle based approach, and comparable solve time to the affine feedback moment based approach.}
    \begin{tabular}{|p{1.2cm}||c|c|c|c|c|c|}
    \hline
    \multirow{2}{*}{Method}  & \multicolumn{2}{c|}{Cost} & \multicolumn{2}{c|}{ $1-\Delta$} & \multirow{2}{*}{Time (s)} \\\cline{2-5}
    & Comp & MC & Comp & MC & \\\hline\hline
    Chance - Open & 124.599 & 124.507 & 0.90 & 0.981 & 2.468 s\\ \hline
    Particle~\cite{blackmore2011chance} & 108.21 & 108.24 & 1.00 & 0.973 & 1.07 s\\ \hline
    Moment - Closed~\cite{paulson2017stochastic} & 105.628 & 109.482 & 0.907 & 1.00 & 6.88 s\\\hline
    \end{tabular}
    \label{tb:DI}
\end{table}

Our chance constrained approach 
obtains an open-loop controller that exceeds the computed constraint satisfaction in Monte Carlo evaluation, with very little increase in computation time compared to the particle based approach. 
In addition, computation time of our approach is comparable to the affine feedback moment based approach, while having a Monte Carlo estimate of 0.98, providing a balance between a high constraint satisfaction while being cheap to compute.

\subsubsection{Varying time horizons}\label{subsub:di_incrhor}

\begin{figure}
    \centering
    \includegraphics[width=\linewidth]{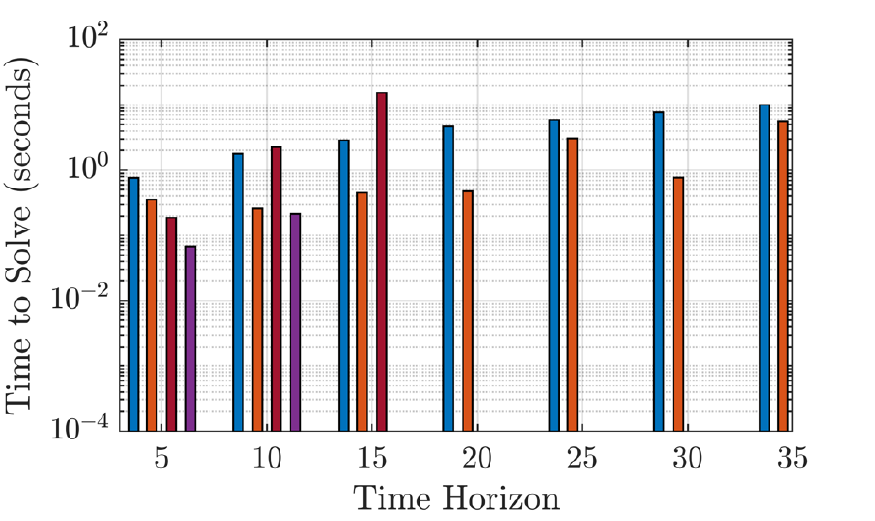}
    \caption{Solve time for varying time horizons for all approaches. Past a horizon of 10 steps
and 15 steps, the open-loop (purple) and affine feedback (maroon) moment based
approaches, respectively, fail to find a controller. Both our
approach (blue) and the particle based approach (orange) are able to compute
a controller.  Our approach has a consistent time to solve. The particle based approach has variability in the time to solve as it is
a mixed integer optimization problem and mixed integer
formulations are in the worst case exponential in terms of
complexity.}
    \label{fig:DI_TTS}
\end{figure}

We compare the solve time as for time horizons between
 0 and 35 time steps, with a exponential disturbance with
scale $\overline{\lambda}_{\bw}(k) = {[10\ 100]}^\top$
(Figure~\ref{fig:DI_TTS}).  

Moment based approaches fail for large time
horizons, possibly due to their reliance on coordinate descent
optimization.  The open-loop approach fails for
time horizons 10 and larger, and the affine approach fails for time horizons 15 and larger. 

While the particle based approach does better than our
approach in solve time
as the time horizon is increased, branch and bound based
approaches are solver specific hence the solve time can vary depending on the solver. 
In addition, as seen for the constant horizon case in Table~\ref{tb:DI}, while the probability of constraint satisfaction for the particle control was noted to be 1, the Monte Carlo estimate was lower.
In comparison, our approach has consistent solve times with Monte Carlo constraint violation greater than what was reported from the computation. 

\subsection{Quadrotor in crosswind of a harsh environment}\label{sub:quad}

We consider a rigid-body quadcopter model, 
\begin{subequations}
\begin{align}
    &\ddot{p}_x = \frac{u_1}{m}\left(\cos\psi\sin\theta+\cos\theta\sin\phi\sin\psi\right)\\
    &\ddot{p}_y = \frac{u_1}{m}\left(\sin\psi\sin\theta-\cos\theta\sin\phi\cos\psi\right)\\
    &\ddot{p}_z = \frac{u_1}{m}\left(\cos\phi\cos\theta\right) -g\\
    &\ddot{\phi} = \frac{I_{yy}-I_{zz}}{I_{xx}}\dot{\theta}\dot{\psi}+\frac{u_2}{I_{xx}}\\
    &\ddot{\theta} = \frac{I_{zz}-I_{xx}}{I_{yy}}\dot{\phi}\dot{\psi}+\frac{u_3}{I_{yy}}\\
    &\ddot{\phi} = \frac{I_{xx}-I_{yy}}{I_{zz}}\dot{\theta}\dot{\phi}+\frac{u_4}{I_{zz}}
\end{align}\label{eq:quadcopter}%
\end{subequations}%
where the state variables $p_x$, $p_y$, and $p_z$ define the translational motion and $\phi,\theta,$ and $\psi$ define the roll, pitch, and yaw respectively. The state is a $12$-dimensional vector, $\overline{x}=[p_x\ p_y\ p_z\ \dot{p}_x\ \dot{p}_y\ \dot{p}_z\ \phi\ \theta\ \psi\ \dot{\phi}\ \dot{\theta}\ \dot{\psi}]^{\top}$. 
The net thrust is described by $u_1$, and the moments around the $p_x$, $p_y$, and $p_z$ axes created by the difference in the motor speeds are described by $u_2$, $u_3$, and $u_4$.
We use the following parameters for the quadcopter: mass $m = 0.478$ kg and moment of inertia $I_{xx} = I_{yy} = 0.0117$ kg m$^2$, and $I_{zz} = 0.00234$ kg m$^2$~\cite{vinod2018multiple}.

We linearize the nonlinear dynamics \eqref{eq:quadcopter} in a hovering operation point (zero state and input $[4.6892,0,0,0]^{\top}$), and discretize the continuous-time dynamics via a zero-order hold with sampling time $T_s= 0.25$. We incorporate the effect of wind into the quadcopter model with an additive stochastic disturbance,
\begin{align}
    \bx(k+1)=A\bx(k) + B\overline{u}(k) + \bw(k)\label{eq:quadcopter_disc}.
\end{align}
We presume a time-invariant triangle distribution to model the wind via the disturbance $\bw(k)$, to characterize the best, worst, and nominal values of the wind (Figure~\ref{fig:distTri}). The wind is assumed to directly influence only the translational motion $p_x$, $p_y$, and $p_z$. The distribution changes in the later half of the control interval as shown in Figure~\ref{fig:distTri}.
\begin{figure}[ht!]
    \centering
    \includegraphics{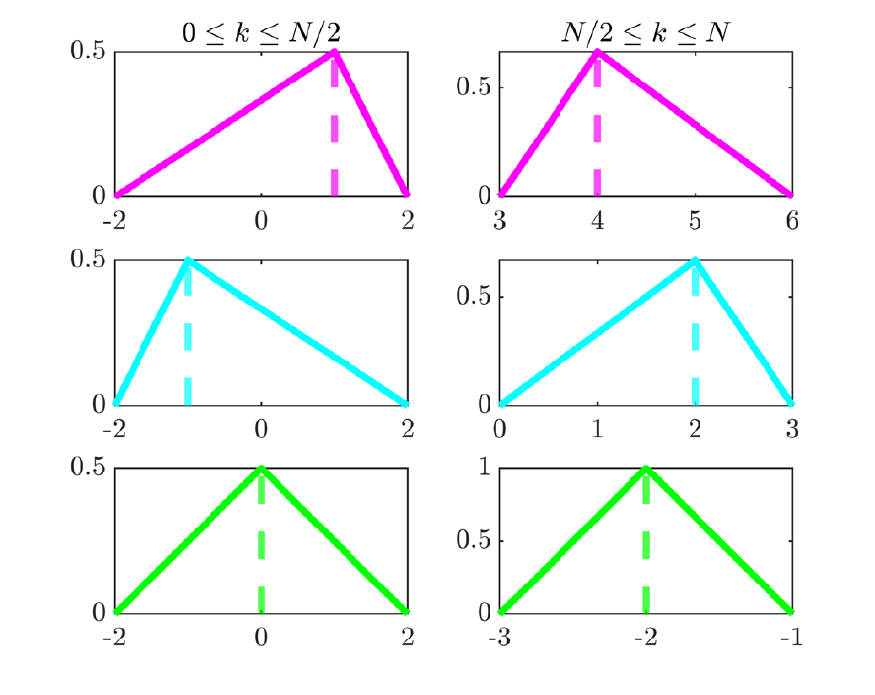}
    \caption{
    The triangular disturbance in the $p_x$, $p_y$, and $p_z$ states of the quadcopter are colored in magenta, cyan, and green respectively. 
    The disturbance starts with the leftmost plots and transitions to the rightmost plots halfway through the time horizon. 
    The parameters of each triangle distribution is given below each plot.}
    \label{fig:distTri}
\end{figure}

We solve the stochastic optimal control problem \eqref{prob:stoc} for a time horizon of $N=10$ with $Q= \text{diag}([10\ 10\ 10\ I_{1\times 9N}])\otimes I_{N\times N}$ and $R = 10^{-3} I_{4N\times 4N}$. 
We specify the desired trajectory $\overline{X}_d$ between $(20,50,25)$ and $(50,20,25)$ via waypoints spread uniformly in time. 
The limits on the input are $\mathcal{U}=[-5,5]^4$. 
The constraint set $\mathscr{S}$, 
\begin{align}
    \mathscr{S} &= \left\{ \overline{x}\in\mathbb{R}^{12}: |p_x|\leq 100,\ |p_y|\leq 100,\ |p_z|\leq 100\right\}
    \nonumber 
\end{align} 
imposes restrictions on the translational motion.
The initial condition is $\overline{x}(0) = [10\ 10\ 0\ \ldots\ 0]^{\top}$.

The probability of constraint satisfaction required is $90\%$ ($\Delta = 0.1$).
Figure~\ref{fig:Quad_Mean} shows the computed trajectories by our approach and the particle based approach. Both the moment based approaches failed to compute a controller due to numerical issues. 
While the trajectories look similar for both our approach and the particle based approach, Table~\ref{tb:quadResults} shows that our approach meets the desired constraint satisfaction (0.92) via Monte Carlo but the particle based control does not (0.767) even though it determined that constraint satisfaction of its controller is 1.
The constraint violations can be seen in Figure~\ref{fig:Quad_MC}, which shows a fraction of the Monte Carlo trajectories on bottom of the red constraint set. 
In addition, while the cost at each time-step (stage cost) of each approach is similar (Figure~\ref{fig:Quad_StageCI}), the particle based approach utilizes some net thrust $u_1$ whereas our approach uses none.

\begin{table}
    \centering
    \caption{Quadcopter example: Cost and constraint satisfaction ($1-\Delta$) for both computed (Comp) and Monte-Carlo (MC) simulation for $10^5$ samples of the disturbance trajectory for our approach and the particle based approach. The open-loop and affine moment based approaches did not compute an optimal controller.}
    \label{tb:quadResults}
    \begin{tabular}{|l||c|c|c|c|c|c|}
    \hline
    \multirow{2}{*}{Method}  & \multicolumn{2}{c|}{Cost ($\times 10^3$)} & \multicolumn{2}{c|}{$1-\Delta$} & \multirow{2}{*}{Time (s)} \\\cline{2-5}
    & Comp & MC & Comp & MC & \\\hline\hline
    Chance - Open & 84.79 & 84.11 & 0.90 & 0.92 & 15.25\\ \hline
    Particle~\cite{blackmore2011chance} & 77.60 & 77.36 & 1.00 & 0.767& 237.85\\ \hline
    \end{tabular}
\end{table}

\begin{figure}[h!]
    \centering
    \includegraphics[width=\linewidth]{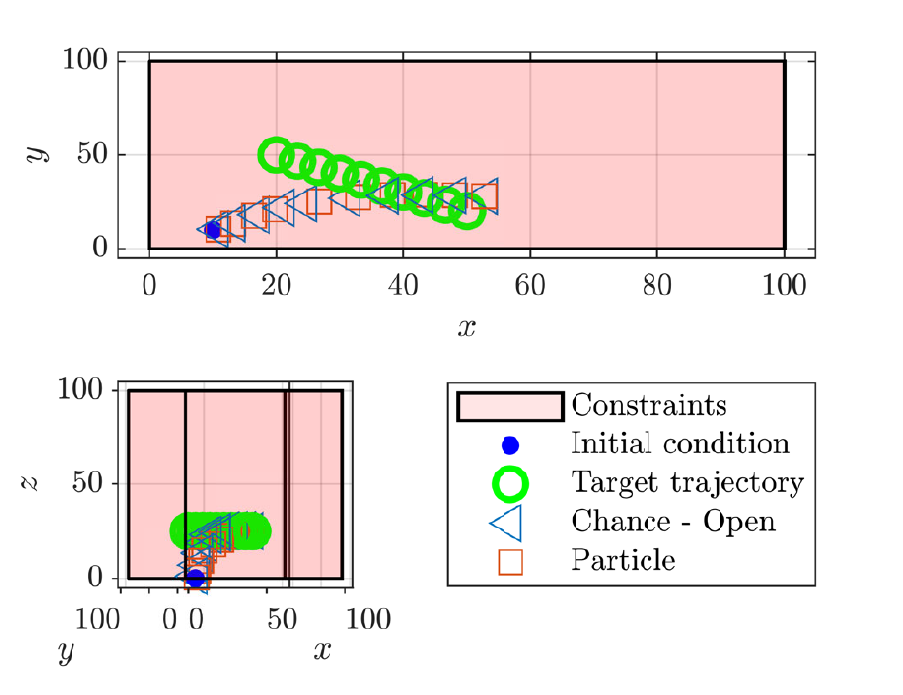}
    \caption{Mean trajectories for our approach and the particle based approach. Only our approach computes a controller that meets the constraint satisfaction when evaluated with sample trajectories (Table~\ref{tb:quadResults}).}
    \label{fig:Quad_Mean}
\end{figure}

\begin{figure}[h!]
    \centering
    \includegraphics[width=\linewidth]{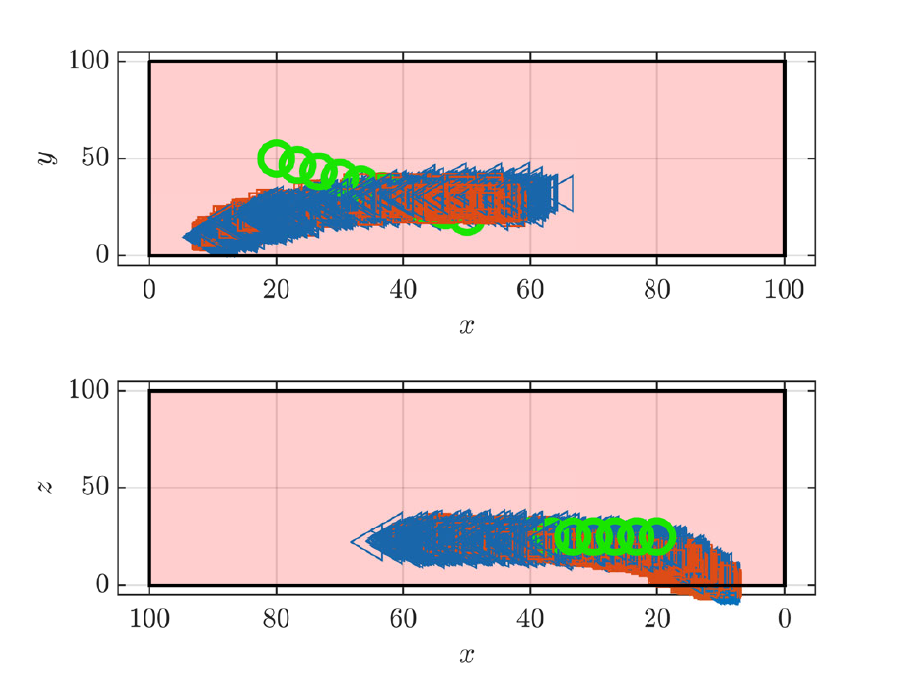}
    \caption{Monte Carlo trajectories of our approach (blue) and
    the particle based approach (orange). The constraint violation is
apparent at the bottom of the shaded region. As seen from Table~\ref{tb:quadResults}, the particle based approach has more constraint violations compared to our approach. Both moment based approaches failed to find a controller.} 
    \label{fig:Quad_MC}
\end{figure}

\begin{figure}[h!]
    \centering
    \includegraphics[width=\linewidth]{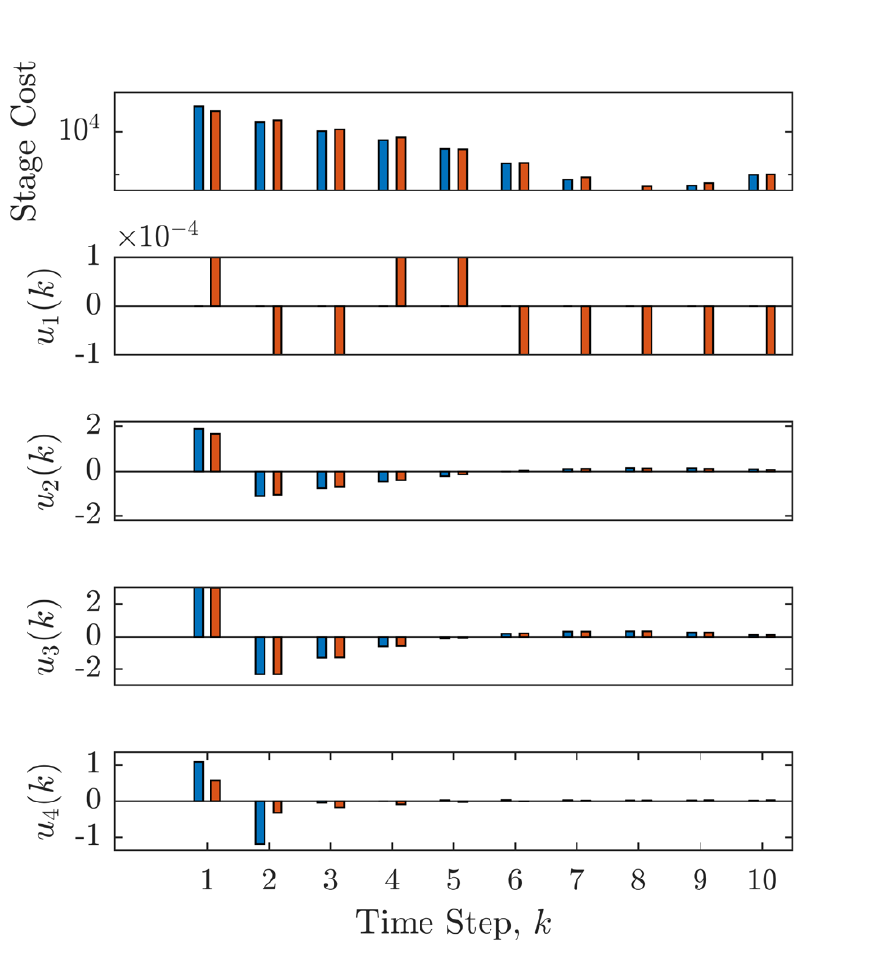}
    \caption{The cost at each time step, i.e. stage cost, and the input at each time step, for our approach and the particle based approach. Both our approach (blue) and particle based approach (orange) have similar values for $u_2, u_3$ but the particle based approach uses $u_1$ with a lower constraint satisfaction than our approach (Table~\ref{tb:quadResults}).}
    \label{fig:Quad_StageCI}
\end{figure}

\section{Conclusion}
\label{sec:conclusion}

We presented a convex optimization based approach for the
constrained, optimal
control of a linear dynamical system with additive, non-Gaussian disturbance. 
Our formulation utilizes a novel Fourier
transformation based risk allocation technique to assure
probabilistic safety for a non-Gaussian disturbance. Our
approach solves a tractable difference-of-convex program to
synthesize the desired controller. 
We make our problem amenable to standard conic solvers via
the use of piecewise affine approximations. 
Numerical experiments show the efficacy of our approach over
existing state of the art approaches, particle control and
moment based approaches, in handling non-Gaussian disturbances. 

\appendix 

\begin{figure*}
    \centering
    \includegraphics[width=0.9\textwidth]{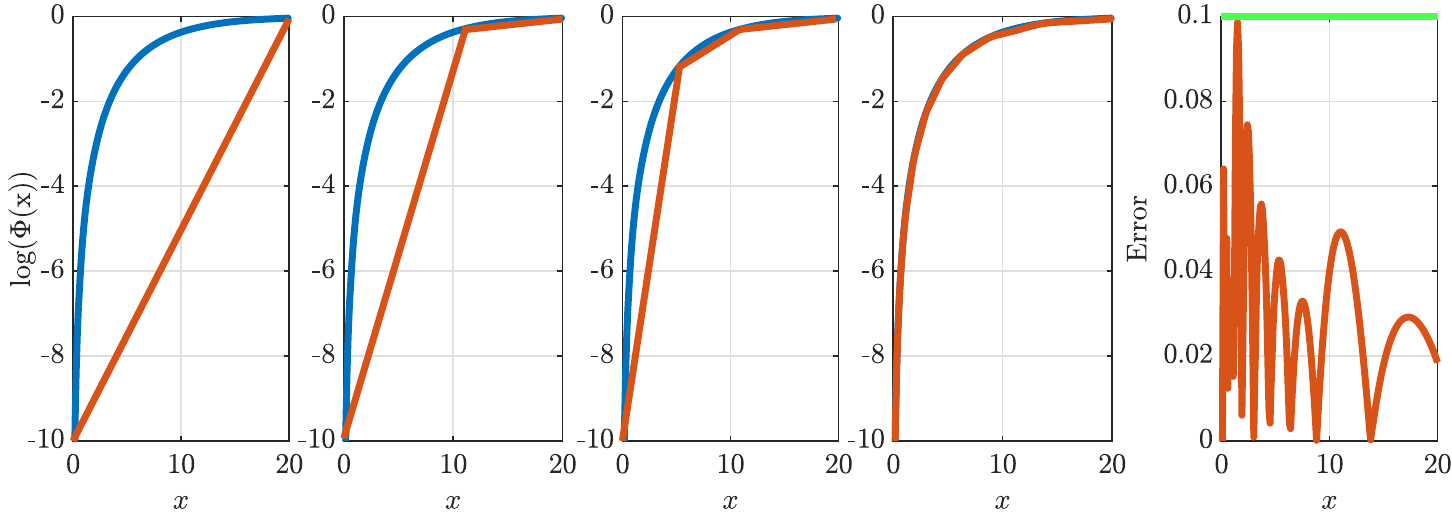}
    \caption{We compute a piecewise affine
        underapproximation (orange) of the log of an exponential
        cumulative distribution function (blue) using the sandwich
        algorithm (Appendix~\ref{ap:pwa}). The rightmost
        plot shows that the computed
    piecewise affine underapproximation keeps the error (orange) below the desired
approximation error of $\eta=0.1$ (green).}
    \label{fig:pwa}
\end{figure*}

\subsection{Difference of convex programming}
\label{ap:dc} 


We now briefly review the convex-concave procedure used to
solve difference-of-convex program \eqref{prob:dc_ex}.
Difference-of-convex programs can be solved to global optimality via general branch-and-bound methods~\cite{horst2000introduction}.
However, these methods typically require additional computational effort.
The \emph{penalty based convex-concave procedure}
(Algorithm~\ref{algo:PCCP}) is a successive convexification based method to find local optima of \eqref{prob:dc_ex} using convex optimization~\cite[Alg. 3.1]{lipp_variations_2016}.
Algorithm~\ref{algo:PCCP} relies on the observation that replacing $g_i$ with their first order Taylor series approximations in \eqref{prob:dc_ex} yields a convex subproblem, which can then be solved iteratively.
To accommodate a potentially infeasible starting point, we
relax the DC constraints using slack variables $
\overline{s}^{(k)}={[s_1^{(k)}\ s_2^{(k)}\ \ldots\ s_L^{(k)}]}^\top\in \mathbb{R}^L$, and penalize the value of the slack variables for each iteration $k$.
A possible exit condition, apart from $\tau>\tau_\mathrm{max}$, is
\begin{subequations}
\begin{align}
    \Big\vert(f_0( \overline{z}_k) -& g_0( \overline{z}_k))- (f_0(
    \overline{z}_{k+1}) - g_0( \overline{z}_{k+1})) \nonumber \\
                                    &+ \tau_{k}
                                    \sum\limits_{i=1}^L
                                    (s_i^{k} - s_i^{k+1})\Big\vert\leq \epsilon_\mathrm{dc}\label{eq:exit_cond_dc}\\
    &\hspace{4em}\sum\nolimits_{i=1}^L s_i^{k+1} \leq\epsilon_\mathrm{viol}\approx 0\label{eq:exit_cond_slack}
\end{align} \label{eq:exit_cond}%
\end{subequations}
where $\epsilon_\mathrm{dc}>0$ and $\epsilon_\mathrm{viol}>0$ are (small) user-specified tolerances.
Here, \eqref{eq:exit_cond_dc} checks if the algorithm has
converged (in the value of the objective), and \eqref{eq:exit_cond_slack} checks if $ \overline{z}_{k+1}$ is feasible.
See~\cite{lipp_variations_2016} for more details, such as convergence guarantees of Algorithm~\ref{algo:PCCP}.

\begin{algorithm}
    \caption{Local optimization of \eqref{prob:dc_ex}~\cite[Alg. 3.1]{lipp_variations_2016}}\label{algo:PCCP}
    \begin{algorithmic}[1]    
        \Require{Initial point $ \overline{z}_0$, $\tau_0 > 0$, $\tau_\mathrm{max}$, $\gamma>1$}
        \Ensure{Local optima of \eqref{prob:dc_ex}}     
        \State $k\gets 0$
        \State\algorithmicdo{}
        \State\hspace{1em} \hbox{\small $\hat{g}_i(\overline{z};\overline{z}_k)\gets g_i(\overline{z}_k) + {{\nabla g_i(\overline{z}_k)}^\top ( \overline{z} - \overline{z}_k)},\forall i\in \mathbb{N}_{[1,L]}$}
        \State\hspace{1em} Solve the following convex problem for
        $\overline{z}_{k+1}, \overline{s}^{(k)}$:\label{step:dc_prob}
        \vspace*{-0.5em}
        \begin{align}
            \begin{array}{rl}
                {\mathrm{minimize}}& f_0(
                \overline{z}_{k+1}) -
                \hat{g}_0(\overline{z}_{k+1};\overline{z}_k)
                + \tau_k  \sum\nolimits_{i=1}^L s_i^{(k)} \\
                \mathrm{subject\ to}& \overline{s}^{(k)}\succeq 0\\
       \forall i\in \mathbb{N}_{[1,L]},& f_i(
       \overline{z}_{k+1}) -
       \hat{g}_i(\overline{z}_{k+1};\overline{z}_k)\leq s_i^{(k)}\\
            \end{array} \nonumber
        \end{align}
        \State\hspace{1em} Update $\tau_{k+1}\gets \min(\gamma\tau_k,\tau_\mathrm{max})$ and $k\gets k+1$
        \State\algorithmicwhile{ $\tau\leq\tau_\mathrm{max}$ and \eqref{eq:exit_cond} is not satisfied}
  \end{algorithmic}
\end{algorithm}

\subsection{Piecewise affine underapproximations for concave functions}\label{ap:pwa}

\begin{algorithm}
    \caption{Piecewise affine underapproximations for
    a concave, differentiable function $f$}\label{algo:approx}
    \begin{algorithmic}[1]    
        \Require{Concave function $f: \mathcal{D} \rightarrow
        \mathcal{R}$, derivative $\nabla f(x)$, interval $\mathcal{D}=[x_\mathrm{min},x_\mathrm{max}]\subseteq
\mathbb{R}$, maximum underapproximation error $\eta>0$}
        \Ensure{Piecewise affine underapproximation $\ell_f^-$}     
        \State Define $\mathcal{I}$ and $\mathcal{F}$ as
        empty stacks
        \State Compute slope $m$ and intercept $c$ for the line joining $(x_\mathrm{min}, f(x_\mathrm{min}))$ and $(x_\mathrm{max}, f(x_\mathrm{max}))$
        \State Push the tuple
        $(x_\mathrm{min},x_\mathrm{max}, m, c)$ into
        $\mathcal{I}$ 
        \While{the stack $\mathcal{I}$ is not empty}
        \State Pop a tuple $(l,u,m,c)$ from $\mathcal{I}$ 
        \State Find the break point $x_{m}\in[l,u]$ s.t. $\nabla f(x_{m})=m$
        \State Set $\texttt{MaxApproxErr}$ as $f(x_{m})- (m x_{m} + c)$
        \If{\texttt{MaxApproxErr}$>\eta$} \Comment{Split $[l,u]$ at $x_m$}
            \State Compute slope and intercept $(m, c)$ for the line joining $(l,f(l))$ and $(x_m,f(x_m))$
            \State Push the tuple $(l,x_m, m, c)$ into
            $\mathcal{I}$ 
            \State Compute slope and intercept $(m, c)$ for the line joining $(x_m,f(x_m))$ and $(u,f(u))$
            \State Push the tuple $(x_m, u, m, c)$ into
            $\mathcal{I}$
        \Else\Comment{$f(x)\approx mx + c$ for $x\in[l,u]$}
            \State Push the tuple $(m,c)$ into $\mathcal{F}$
        \EndIf
        \EndWhile
        \State \Return $\ell_f^-(x)= \underset{(m,c)\in
        \mathcal{F}}{\min}(m x+c)$
  \end{algorithmic}
\end{algorithm}

Let $f: \mathcal{D} \rightarrow \mathcal{R}$ be a concave,
differentiable function defined for bounded, closed, convex,
intervals $ \mathcal{D}, \mathcal{R} \subset \mathbb{R}$.
Given a user specified approximation error $\eta > 0$,
we seek a piecewise affine underapproximation $\ell^-_{f}$ \eqref{eq:ell_defn_f} which satisfies
\eqref{eq:ell_defn_approx},
\begin{equation}
    \ell_f^-(x)\leq f(x) \leq \ell_f^-(x) + \eta. \nonumber
\end{equation}
We use $\nabla f: \mathcal{D} \to \mathbb{R}$ to denote the
derivative of $f$.

The sandwich algorithm (Algorithm~\ref{algo:approx})
constructs such an underapproximation via bisection,
specifically 
the \emph{slope-bisection rule}~\cite{rote1992convergence}.  The \emph{slope-bisection
rule} bisects a given interval $[l, u]$ at the point
$x_{m}$ such that $\nabla f(x_{m})=m =
\frac{f(u)-f(l)}{u-l}$. Due to the concavity of $f$, the
maximum error of underapproximating $f$ using a line $y=m
x + c$ with $c=f(l) - m l$ over the interval $[l,u]$ occurs
at $x_{m}$.

Algorithm~\ref{algo:approx} uses two \emph{stacks},
which are last-in first-out data structures~\cite{cormen2009introduction}. Recall that
stacks have two operations: \emph{push} to add an element to
the top of the stack, and \emph{pop} to retrieve (and
delete) the element from the top of the stack. Here, we use
the stack $\mathcal{I}$ to store the tuples associated with
intervals that must be processed to satisfy the
user-specified maximum underapproximation error $\eta$, and
the stack $ \mathcal{F}$ to store the resulting slope and intercept pairs that together define $\ell_f^-$.

To illustrate the use of Algorithm~\ref{algo:approx}, we
compute a piecewise affine underapproximation of the log of
the cumulative distribution of a non-Gaussian random
variable $\bv$. Such piecewise affine underapproximations
admit conservative enforcement of the chance constraints, as
seen in \eqref{prob:stoc_dc}. Figure~\ref{fig:pwa} shows the
approximations for the affine transformation of an exponential disturbance $a^{\top}\bw_t$ where $\bw_t = [\bw_1\ \bw_2\ \bw_3]^{\top}\in\mathbb{R}^3$ and $a = [1\ 0.5\ 0.75]^{\top}$ where the scale parameters are $\overline{\lambda}_{\bw}(k) = {[0.5\ 0.25\ 0.1667]}^\top$. 
Note that the derivative of the cumulative distribution function $\nabla \log(\Phi_{\bw}(x)) = \frac{1}{\Phi_{\bw}(x)}\psi_{\bw}(x)$ where $\psi_{\bw}(x)$ is the probability density function.
Both the cumulative distribution function and the probability density function can be evaluated from the characteristic function via Fourier inversion \cite{witov}.

\bibliographystyle{IEEEtran}
\bibliography{IEEEabrv,shortIEEE,refs}

\begin{IEEEbiography}[{\includegraphics[width=1in,height=1.25in,clip,keepaspectratio]{./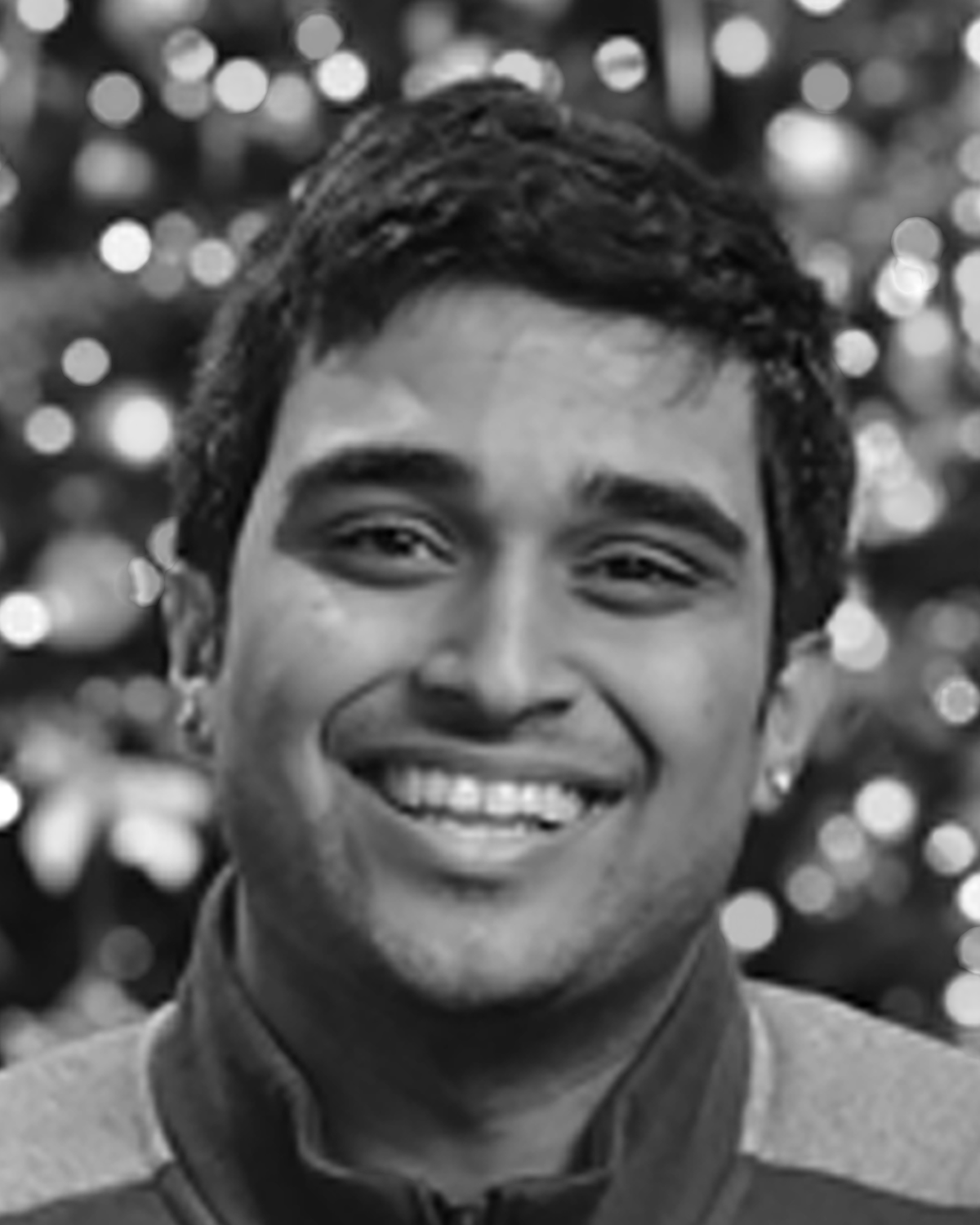}}]{Vignesh Sivaramakrishnan}(S'18) received the B.S. degree in Mechanical Engineering from the University of Utah in 2017.

He is currently pursuing a Ph.D. degree in Electrical and Computer Engineering at the University of New Mexico, Albuquerque, NM. His interest is in mathematical optimization with application to optimal control, reachability, and identifying nonlinear/stochastic systems. 
\end{IEEEbiography}
\begin{IEEEbiography}[{\includegraphics[width=1in,height=1.25in,clip,keepaspectratio]{./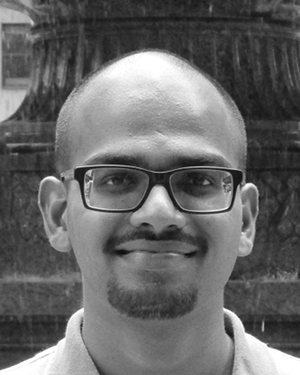}}]{Abraham Vinod}(S'15) received the B.Tech. and the M.Tech degree in
    Electrical Engineering from the Indian Institute of Technology, Madras
    (IITM), Chennai, TN, India in 2014, and a Ph.D. degree in Electrical
    Engineering from the University of New Mexico,
    Albuquerque, NM, USA in 2018. His
    research interests are in the areas of optimization,
    stochastic control, and learning.
    Dr. Vinod was awarded the Best Student Paper Award in the 2017 ACM Hybrid
    Systems: Computation and Control Conference, the
    finalist for the Best Paper Award in the 2018 ACM Hybrid
    Systems: Computation and Control Conference, the
    Prof. Achim Bopp Prize (IITM), and the Central Board of
    Secondary Education Merit Scholarship.
\end{IEEEbiography}
\begin{IEEEbiography}[{\includegraphics[width=1in,height=1.25in,clip,keepaspectratio]{./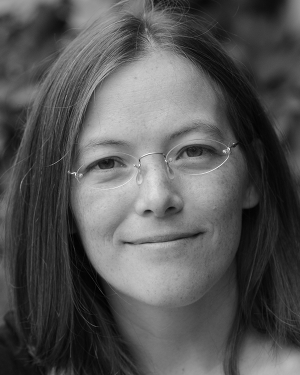}}]{Meeko Oishi}
(M'04) received the B.S.E. degree in mechanical engineering from
    Princeton University, Princeton, NJ, USA, in 1998, and the M.S. and Ph.D.
    degrees in mechanical engineering from Stanford University, Stanford, CA,
    USA, in 2000 and 2004, respectively, the Ph.D. (minor) degree in electrical
    engineering.  
    
    She is a Professor of Electrical and Computer
    Engineering with University of New Mexico, Albuquerque, NM, USA. Her
    research interests include hybrid dynamical systems, control of
    human-in-the-loop systems, reachability analysis, and motor
    control in Parkinson's disease. She previously held a
    faculty position with University of British Columbia at Vancouver and
    postdoctoral positions with Sandia National Laboratories and National
    Ecological Observatory Network.
    Prof. Oishi received the UNM Regents'
    Lectureship, the NSF CAREER Award, the UNM Teaching Fellowship, the Peter
    Wall Institute Early Career Scholar Award, the Truman Postdoctoral
    Fellowship in National Security Science and Engineering, and the George
    Bienkowski Memorial Prize, Princeton University. She was a Summer Faculty
    Fellow at AFRL Space Vehicles Directorate, and a Science and Technology
    Policy Fellow at The National Academies.
\end{IEEEbiography}

\end{document}